\title[Green's function of the linearized Monge-Amp\`ere operator ]{Remarks on the Green's function of the linearized Monge-Amp\`ere operator}
\author{Nam Q. Le}
\address{Department of Mathematics, Indiana University, 831 E 3rd St,
Bloomington, IN 47405, USA}
\email{nqle@indiana.edu}
\newcommand{\review}[2][\right]{\relax
\ifx#1\right\relax \left.\fi#2#1\rvert}
\newtheorem{theorem}{Theorem}[section]
\newtheorem{propo}[theorem]{Proposition}
\newtheorem{remark}[theorem]{Remark}
\newtheorem{cor}[theorem]{Corollary}
\newtheorem{lemma}[theorem]{Lemma}
\newcommand{\bef}{\begin{flushright}}
\newcommand{\eef}{\end{flushright}}
\newcommand{\eval}[2][\right]{\relax
\ifx#1\right\relax \left.\fi#2#1\rvert}
\numberwithin{equation}{section}
\newcommand{\p}{\partial}
\newcommand\e{\varepsilon}  
\newcommand{\h}{\hspace*{.24in}}
\def\h{\hspace*{.24in}}
\def\beq{\begin{eqnarray*}}
\def\eeq{\end{eqnarray*}}
\def\RR{\mbox{$I\hspace{-.06in}R$}}
\newenvironment{myindentpar}[1]%
{\begin{list}{}%
         {\setlength{\leftmargin}{#1}}%
         \item[]%
}
{\end{list}}
\begin{document}
\begin{abstract}
In this note, we obtain sharp bounds for the Green's function of the linearized Monge-Amp\`ere operators associated to convex functions with either 
Hessian determinant bounded away from zero and infinity or Monge-Amp\`ere measure satisfying a doubling condition. Our result is an affine invariant version of the 
classical result of Littman-Stampacchia-Weinberger for uniformly elliptic operators in divergence form. We also obtain the $L^{p}$ integrability for the gradient of the Green's function
in two dimensions. As an application, we obtain 
a removable singularity result for the linearized Monge-Amp\`ere equation.
\end{abstract}
\maketitle
\pagenumbering{arabic}
\section{Introduction and Statement of the main result}
In \cite{LSW}, Littman-Stampacchia-Weinberger established the fundamental sharp bounds for the Green's function of linear, uniformly elliptic operator in divergence form
$L= - \p_j(a^{ij} \p_i)$ on a smooth, bounded domain $V\subset\RR^n$.
Here the coefficient matrix $(a^{ij})$ is symmetric with real, bounded measurable entries and uniformly elliptic, that is, there are positive constants $\lambda, \Lambda$ such that
\begin{equation}\lambda I_n \leq (a^{ij})\leq \Lambda I_n.
 \label{Eeq}
\end{equation}
This condition is invariant under the orthogonal transformation of coordinates. Let $g(x, y)$ be the Green's function of the operator L on $V$, that is, for 
each $y\in V$, $g(\cdot, y)$ is a positive solution of 
$$Lg(\cdot, y)=\delta_{y}~\text{in} ~V,~\text{and}~
 g(\cdot, y) =0~\text{on}~\p V.$$
Then, it was shown in \cite{LSW} that  $g$ is comparable to the Green's function of the Laplace operator $-\Delta$. In particular, $g$
satisfies the following sharp bounds in dimensions $n\geq 3$:
\begin{equation}C^{-1}|x-y|^{-(n-2)}\leq g(x, y)\leq C |x-y|^{-(n-2)}~\forall y\in V
\label{GLSW}
\end{equation}
where $C=C(n,\lambda,\Lambda, V, dist(y, \p V)).$ Other important properties of $g$ such as integrability and continuity of its gradient were studied by
Gr\"uter-Widman in \cite{GW}.

This note is concerned with estimates, analogous to (\ref{GLSW}), for the Green's function of the linearized Monge-Amp\`ere equation, an affine invariant version of (\ref{Eeq}). Let $\Omega$ be a bounded, smooth, uniformly convex domain in $\RR^n$
and $\mu$ a Borel measure in $\Omega$ with $\mu(\Omega)<\infty$. Let $u$ be a convex function 
satisfying the following Monge-Amp\`ere equation in the sense of Aleksandrov (see \cite{G})
\begin{equation}\det D^{2} u=\mu\h \text{in} ~\Omega,~u=0 ~\text{on} ~\p \Omega.
\label{MAeq}
\end{equation}

We consider two typical cases. The first case is when $\mu = f dx$ where $f$ is bounded from below and above by some  positive constants $\lambda, \Lambda$:
\begin{equation}\lambda\leq f\leq\Lambda\h \text{in} ~\Omega.
 \label{fbound}
\end{equation}
The second case is when $\mu$ is {\it doubling with respect to the center of mass.} This will be made more precise later. We assume throughout the note that $u$ is smooth but our estimates do not
depend on the smoothness of $u$.

Denote by $U= (U^{ij})\equiv (\det D^2 u) (D^2 u)^{-1}$ the cofactor matrix of the Hessian matrix $D^2u$.
Then, the linearized operator of the Monge-Amp\`ere equation (\ref{MAeq}) is given by
$$L_u v:= -U^{ij}v_{ij}\equiv - (U^{ij} v)_{ij}.$$
The last equation is due to the fact that $U = (U^{ij})$ is divergence-free. The reader is referred to \cite{CG, TW} and the references therein for more information
on the theory of linearized Monge-Amp\`ere equation and its applications to fluid mechanics and differential geometry.

The Monge-Amp\`ere and linearized Monge-Amp\`ere equations are invariant under unimodular transformation of coordinates. Indeed, let $T$ be a linear transformation with $\det T=1$. Then, the rescaled functions
$$\tilde u(x) = u(Tx),~\tilde v(x) = v (Tx),$$
satisfy
$$\det D^2 \tilde u(x) =\det D^2 u(x),~\tilde U^{ij} \tilde v_{ij}(x) = U^{ij}v_{ij}(Tx).$$

The linearized Monge-Amp\`ere operator $L_u$ is in general not uniformly elliptic. Under (\ref{MAeq}) and (\ref{fbound}), the eigenvalues of $U = (U^{ij})$ are not necessarily 
bounded away from $0$ and $\infty.$ The degeneracy is the main difficulty in establishing our affine invariant analogue of (\ref{GLSW}). As in \cite{CG}, 
we handle the degeneracy of $L_u$ by working with sections of solutions to the Monge-Amp\`ere equations. These sections have the same role as Euclidean balls have in the classical theory. The section of $u$ with center $x_0$ and height $t$ is defined by
$$S_u(x_0, t)=\{x\in\overline{\Omega}: u(x) <u(x_0) + \nabla u(x_0) (x-x_0) + t\}.$$
We say that the Borel measure $\mu$ is {\it doubling with respect to the center of mass} on the sections of $u$ if there exist constants $\beta>1$ and $0<\alpha<1$ such that for all sections 
$S_u (x_0, t)$,
\begin{equation}
 \label{muDC}
 \mu (S_u(x_0, t)) \leq \beta \mu (\alpha S_u (x_0, t/2)).
\end{equation}
Here $\alpha S_u (x_0, t)$ denotes the $\alpha$-dilation of $S_u(x_0, t)$ with respect to its center of mass $x^{\ast}$:
$$\alpha S_u(x_0, t)= \{x^{\ast} + \alpha (x- x^{\ast}): x\in S_u(x_0, t)\}.$$

Let $g_{V}(x, y)$ be the Green's function of $L_u$ in $V$ where $V\subset\subset\Omega$.  
\subsection{The main result}
In this note, we obtain the sharp upper bounds for $g_V$ in all dimensions when $u$ satisfies (\ref{MAeq}) and (\ref{fbound}). We also obtain
the sharp lower bounds for $g_V$ when $\mu$ satisfies a more general doubling condition (\ref{muDC}).
Our main result states:
\begin{theorem} \label{Gthm} Fix $x_0\in V$. Suppose that $0<t<1/4$, $S_u(x_0, 2t)\subset\subset V$ if $n\geq 3$ and $S_u(x_0, t^{1/2})\subset\subset V$ if $n=2$. 
\begin{myindentpar}{1cm}
(i) Assume that (\ref{MAeq}) and (\ref{fbound}) are satisfied.  
Then, for $x\in S_u(x_0, t)$, we have
$$g_V(x, x_0) \geq \begin{cases} c(n,\lambda, \Lambda) t^{-\frac{n-2}{2}} &\mbox{if } n \geq 3 \\
c(n,\lambda, \Lambda) |\text{log}~ t| & \mbox{if } n = 2. \end{cases} $$
Moreover, for $x\in \p S_u(x_0, t)$, we have
$$g_V(x, x_0) \leq \begin{cases} C(V,\Omega, n,\lambda, \Lambda) t^{-\frac{n-2}{2}} &\mbox{if } n \geq 3 \\
C(V, \Omega, n,\lambda, \Lambda) |\text{log}~ t| & \mbox{if } n = 2. \end{cases} $$
(ii) Assume that (\ref{MAeq}) and (\ref{muDC}) are satisfied.  Then, for $x\in S_u(x_0, t)$, we have
$$\displaystyle g_V(x, x_0) \geq \begin{cases} c(n,\alpha, \beta) t \left(\mu(S_u(x_0, t))\right)^{-1} &\mbox{if } n \geq 3 \\
\displaystyle c(n,\alpha,\beta) \frac{|\text{log}~ t|^2}{\int_t^{t^{1/2}} \frac{ \mu(S_u(x_0, s)) ds}{s^2}} & \mbox{if } n = 2. \end{cases} $$
(iii) Suppose that $n=2$ and (\ref{MAeq}) and (\ref{fbound}) are satisfied. Then there exists $p_{\ast}(n, \lambda,\Lambda)>1$ such that for all $1<p< p_{\ast}$ and all 
$S_u(x_0, r^{1/2})\subset\subset V$, we have
$$\left(\int_{S_u(x_0, r)}|\nabla g_V (x, x_0)|^p dx\right)^{\frac{1}{p}}\leq C(V, \Omega, n,p, \lambda,\Lambda, r).$$
\end{myindentpar}
\end{theorem}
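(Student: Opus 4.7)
The strategy is to imitate the Littman-Stampacchia-Weinberger scheme with sections of $u$ replacing Euclidean balls, and to use the affine invariance of $L_u$ to reduce each local estimate to a normalized model. The auxiliary tools I would use throughout are: (a) John's lemma, which produces a unimodular-type affine map normalizing any section to a domain comparable to the unit ball; (b) the volume estimate $c\,t^{n/2}\leq |S_u(x_0,t)|\leq C\,t^{n/2}$ of Caffarelli under (\ref{fbound}), together with its doubling-measure analogue under (\ref{muDC}); (c) the Caffarelli-Guti\'errez Harnack inequality for non-negative $L_u$-solutions on sections, valid under either (\ref{fbound}) or (\ref{muDC}); and (d) the divergence-form, self-adjoint structure of $L_u$ coming from the divergence-free cofactor matrix, which gives both the energy identity $g_V(x_0,x_0')-s=\int_{\{g_V>s\}}U^{ij}(g_V)_i(g_V)_j\,dx$ obtained by testing $L_u g_V=\delta_{x_0}$ against $(g_V-s)_+$, and the flux identity $\int_{\partial S_u(x_0,s)}U^{ij}(g_V)_i\nu_j\,d\sigma=-1$ for each $s$ with $S_u(x_0,s)\subset\subset V$.

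For the lower bounds in parts (i) and (ii), I would first transport the flux identity through the affine normalization of $S_u(x_0,2t)$ (or of $S_u(x_0,t^{1/2})$ in two dimensions), so that it takes place on a domain comparable to $B_1$ with a uniformly elliptic operator; an averaging argument then produces a pointwise lower bound for $g_V$ at some point of $\partial S_u(x_0,2t)$, and Harnack propagates this bound to all of $S_u(x_0,t)$. The scaling factor $t^{-(n-2)/2}$, or $t/\mu(S_u(x_0,t))$ under the doubling hypothesis, comes from the volume scaling of sections; the logarithmic factor $|\log t|$ in two dimensions, and the integral $\int_t^{t^{1/2}}\mu(S_u(x_0,s))s^{-2}\,ds$ appearing in (ii), arise from summing the flux contributions over a dyadic chain of intermediate sections stretching from height $t$ up to height $t^{1/2}$. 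For the upper bound in (i), I would invoke the symmetry $g_V(x,x_0)=g_V(x_0,x)$ and perform a De Giorgi iteration on the energy identity, using a Sobolev inequality on the normalized section (where $L_u$ becomes uniformly elliptic by Caffarelli's $C^{1,\alpha}$ theory) to extract the decay $|\{g_V>s\}|\leq Cs^{-n/(n-2)}$, which is equivalent to the claimed bound on $\partial S_u(x_0,t)$.

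For part (iii), the plan is a Caccioppoli reverse inequality for $g_V$ on a pair of nested sections $S_u(x_0,r)\subset\subset S_u(x_0,r^{1/2})\subset\subset V$, followed by Gehring's self-improvement lemma applied in the normalized coordinates, which promotes $L^2$ integrability of $\nabla g_V$ to $L^p$ for some $p_\ast(n,\lambda,\Lambda)>1$. The main obstacle throughout is the consistent handling of the degeneracy of $L_u$ under the change of geometry from balls to sections, especially in the two-dimensional lower bound: the chain of Harnack steps must be calibrated precisely along a geometric sequence of sections from height $t$ to $t^{1/2}$ (which is exactly what the hypothesis $S_u(x_0,t^{1/2})\subset\subset V$ reflects), and verifying that each step yields the same uniform multiplicative Harnack constant requires uniform control on the quasi-metric induced by sections across many scales, via the engulfing property of sections established by Caffarelli-Guti\'errez.
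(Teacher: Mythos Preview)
Your proposal contains a recurring error that undermines both the upper bound in (i) and all of (iii): you assert that after affine normalization ``$L_u$ becomes uniformly elliptic by Caffarelli's $C^{1,\alpha}$ theory.'' This is false. Caffarelli's regularity gives $u\in C^{1,\alpha}$ on a normalized section, but under (\ref{fbound}) alone the eigenvalues of $D^2u$ are \emph{not} bounded away from $0$ and $\infty$; that is precisely why the Caffarelli--Guti\'errez Harnack inequality is a deep result rather than a corollary of De~Giorgi--Nash--Moser. Consequently your De~Giorgi iteration for the upper bound cannot be launched as stated, because the standard Sobolev inequality with constant depending only on $n,\lambda,\Lambda$ is unavailable. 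The paper instead gets the upper bound by a direct iteration (Lemma~\ref{max_lem}) comparing $g_V$ with the Green's function of a single section $g_{S_u(x_0,2t)}$, whose size is controlled (Lemma~\ref{smallV}) by testing $L_u\sigma=\delta_{x_0}$ against $u-t$ and invoking the ABP estimate---a genuinely Monge--Amp\`ere argument that bypasses ellipticity. Note also that your decay estimate $|\{g_V>s\}|\leq Cs^{-n/(n-2)}$ is vacuous when $n=2$, so even granting a distribution-function approach (as in Tian--Wang, Proposition~\ref{TiWlem}) you would not obtain the two-dimensional logarithmic upper bound.

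For part (iii) the gap is more serious. A Caccioppoli inequality for $L_u$ controls $\int U^{ij}v_i v_j$, not $\int|\nabla v|^2$, and without uniform ellipticity these are not comparable; Gehring's lemma therefore does not apply. The paper's argument is entirely different and two-dimensional in an essential way: from the upper bound in (i) one gets $g_V\in L^q$ for every $q<\infty$ (since $|\log t|$ is integrable against $dt$); Maldonado's logarithmic energy inequality bounds $\int_S U^{ij}v_iv_j/v^2$; the pointwise matrix inequality $U^{ij}v_iv_j\geq (\det D^2u)|\nabla v|^2/\Delta u$ converts this to control of $|\nabla v|$; and finally the De~Philippis--Figalli--Savin/Schmidt $W^{2,1+\varepsilon}$ estimate for $\Delta u$ is used via H\"older to close the argument, yielding $p_\ast=1+\varepsilon/(2+\varepsilon)$. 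None of these ingredients appears in your plan. Your lower-bound sketches for (i) and (ii) are broadly on target (dyadic chain of sections plus Harnack), though for (ii) the paper uses an explicit capacity computation with test functions $\gamma(u(x))$ rather than a flux identity.
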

Our estimates in Theorem \ref{Gthm} depend only on the dimension, the upper and lower bound of the Hessian determinant. They do not depend on the bounds on eigenvalues of the Hessian matrix $D^2u$. Properties of the 
Green's function $g_V$ have played an important role in establishing Sobolev inequality for the Monge-Amp\`ere quasi-metric structure \cite{TiW, MRL}.
\begin{remark} 
In Theorem \ref{Gthm} (iii), we can choose
$$p_{\ast} = 1 + \frac{\e}{2+\e}$$
where $\e= \e(n,\lambda,\Lambda)$ comes from 
De Philippis-Figalli-Savin
and Schmidt's $W^{2, 1+\e}$ estimates \cite{DPFS, Sch} for the Monge-Amp\`ere equation satisfying (\ref{MAeq}) and (\ref{fbound}). Thus 
$p_{\ast}\rightarrow 2$ when $\e\rightarrow \infty$. Hence, by Caffarelli's $W^{2,p}$ estimates for the Monge-Amp\`ere equations \cite{C}, we can take $p^{\ast}=2$ when
$f$ is continuous.
\end{remark}

\begin{remark}
In the case of Green's function of uniformly elliptic operators, Theorem \ref{Gthm} (iii) with all $p<2$ is attributed to Stampacchia. In higher dimensions, Gr\"uter and Widman 
\cite{GW} proved
the $L^{p}$ integrability of the gradient of the Green's function for all $p<\frac{n}{n-1}$. It would be interesting to prove the $L^{p}$ integrability for some $p>1$ 
for the gradient of the Green's function of the linearized Monge-Amp\`ere operator in dimensions $n\geq 3$.
\end{remark}

As a corollary, we use the sharp lower bound for the Green's function in Theorem \ref{Gthm} to prove a 
removable singularity result for the linearized Monge-Amp\`ere equation.

\begin{cor} Assume that $V\subset\subset \Omega$ and
$\lambda\leq \det D^2 u\leq\Lambda~\text{in}~\Omega.$
Suppose that a function $v$ solves $U^{ij}v_{ij}=0$ in $S_u(0, R)\backslash \{0\}\subset V$ and satisfies 
$$|v(x)|= \begin{cases} o(r^{\frac{2-n}{2}}) &\mbox{if } n \geq 3 \\
o( |\text{log} ~r|) & \mbox{if } n = 2 \end{cases}~\text{on}~\p S_u(0, r)~\text{as}~r\rightarrow 0.$$ Then v has a removable singularity at $0.$
\label{RemoveSing}
\end{cor}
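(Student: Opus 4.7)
The plan is a standard barrier argument of Littman--Stampacchia--Weinberger type, with the sharp lower bound of Theorem~\ref{Gthm}(i) playing the role of the obstacle. First I would fix $r\in(0,R)$ small enough that $S_u(0,2r)\subset\subset V$, so that Theorem~\ref{Gthm}(i) applies at every scale $t\leq r$, and solve the auxiliary Dirichlet problem
$$U^{ij}\tilde v_{ij}=0 \text{ in } S_u(0,r), \qquad \tilde v = v \text{ on } \partial S_u(0,r).$$
Since $u$ is smooth and $\det D^2 u\geq \lambda>0$, the cofactor matrix $U$ is uniformly positive definite on $\overline{S_u(0,r)}$ and classical linear elliptic theory furnishes such a $\tilde v$ with $\|\tilde v\|_{L^{\infty}(S_u(0,r))}\leq \max_{\partial S_u(0,r)}|v|$. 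The corollary will follow once I show $v\equiv \tilde v$ on $S_u(0,r)\setminus\{0\}$, for then $v$ extends across the singularity by setting $v(0):=\tilde v(0)$.

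Set $w:=v-\tilde v$, so that $U^{ij}w_{ij}=0$ in $S_u(0,r)\setminus\{0\}$, $w=0$ on $\partial S_u(0,r)$, and, because $\tilde v$ is bounded, $w$ inherits the same growth condition as $v$ on $\partial S_u(0,t)$ as $t\to 0$. The core step is to compare $w$ with the Green's function. By Theorem~\ref{Gthm}(i), for every $t\in(0,r)$ and every $x\in \overline{S_u(0,t)}$,
$$g_V(x,0)\geq c(n,\lambda,\Lambda)\,t^{(2-n)/2}\quad (\text{resp.\ }c(n,\lambda,\Lambda)\,|\log t|\text{ if }n=2).$$
Given $\varepsilon>0$, the hypothesis on $v$ then provides $t_0=t_0(\varepsilon)>0$ such that $|w|\leq \varepsilon\,g_V(\cdot,0)$ on $\partial S_u(0,t)$ for every $t<t_0$; the same inequality holds on $\partial S_u(0,r)$, where $w\equiv 0$ while $g_V(\cdot,0)>0$. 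Both $\pm w$ and $\varepsilon\,g_V(\cdot,0)$ are $L_u$-harmonic on the annular region $S_u(0,r)\setminus \overline{S_u(0,t)}$ (the Green's function being $L_u$-harmonic away from its pole), so the classical maximum principle for the nondegenerate linear elliptic operator $L_u$ forces $|w|\leq \varepsilon\,g_V(\cdot,0)$ throughout that annulus. Fixing $x\in S_u(0,r)\setminus\{0\}$, sending $t\to 0$ (so that eventually $x\notin \overline{S_u(0,t)}$), and then $\varepsilon\to 0$ will yield $w(x)=0$, as required.

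The main obstacle is essentially absorbed into Theorem~\ref{Gthm}(i): the sharp lower bound $g_V(\cdot,0)\gtrsim t^{(2-n)/2}$ on $\partial S_u(0,t)$ is precisely calibrated to the prescribed decay rate of $v$, and once available the rest reduces to a single linear elliptic Dirichlet solve plus a two-boundary maximum principle on an annular region between two sections. The only mild technicality to check is that the sections $S_u(0,t)$ shrink to $\{0\}$ as $t\to 0$, which is standard for sections of the smooth strictly convex function $u$ and guarantees that every $x\neq 0$ eventually lies in the annulus on which the comparison is performed.
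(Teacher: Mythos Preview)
Your argument is correct and follows essentially the same route as the paper's own proof: solve a Dirichlet problem on a section to obtain $\tilde v$, set $w=v-\tilde v$, and use the sharp lower bound of Theorem~\ref{Gthm}(i) for the Green's function as a barrier on the annular region between two sections, applying the maximum principle and then letting the inner scale tend to zero. The only cosmetic differences are that the paper works on $S_u(0,R)$ with the Green's function $g_{S_u(0,R)}(\cdot,0)$ and tracks the constant $M_r=\max_{\partial S_u(0,r)}|w|$ explicitly, whereas you work on a smaller $S_u(0,r)$ with $g_V(\cdot,0)$ and phrase the comparison directly in terms of $\varepsilon$; neither change is substantive.
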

\subsection{Previous results} Various properties of the Green's function of the linearized Monge-Amp\`ere operator $L_u$ under different conditions on $\mu$ have been
studied by several authors, including Tian-Wang \cite{TiW} and Maldonado \cite{MRL}.
Tian-Wang \cite{TiW} proved a decay estimate for the distribution function of $g_V$ under an 
$(A_{\infty})$ weight 
condition on $\mu$ (called $\bf (CG)$ there) and certain conditions on the size of sections of $u$. 
\begin{propo}(\cite[Lemma 3.3]{TiW}) \label{TiWlem}
Assume that $\mu$ satisfies the structure condition:\\
{\bf CG}. For any given $\e > 0$, there exists $\delta>0$ such that for any convex set $S\subset\Omega$ and
any set $E\subset S$, if $|E| \leq \delta|S|$, then
$\mu(E)\leq \e\mu(S)$
where $|\cdot|$ denotes the Lebesgue measure. Suppose that for any section $S_u(x, h)\subset \Omega$ of $u$, we have
$$C_1 |S_u(x, h)|^{1+\theta}\leq \mu(S_u(x, h)) \leq C_2 |S_u(x, h)|^{\frac{1}{n-1} +\sigma},$$
where $\theta\geq 0, C_1, C_2,\sigma>0$ are constants. Then, for any $y\in V$,
$$\mu \{x\in V: g_V(x, y)>t\}\leq K t^{-\frac{n(1+\theta)}{(n-1)(1+\theta)-1}}.$$
\end{propo}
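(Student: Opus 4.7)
The plan is to adapt Stampacchia's classical weak-type argument for distribution functions of Green's functions to the affine invariant setting, with Monge-Amp\`ere sections replacing Euclidean balls. The main analytic input will be a weighted Sobolev inequality whose critical exponent is forced on us by the lower size bound $C_1|S_u(x,h)|^{1+\theta}\le\mu(S_u(x,h))$; once that is in hand, the distribution function estimate drops out by truncation.

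\textbf{Step 1 (Truncation and energy identity).} Fix $y\in V$ and write $v:=g_V(\cdot,y)$. For $0<s<t$, introduce the truncated test function
$$w_{s,t}:=\min\bigl\{(v-s)_{+},\,t-s\bigr\}.$$
Since $g_V=0$ on $\partial V$ and $w_{s,t}$ is bounded, $w_{s,t}\in H^1_0(V)$; the singularity of $v$ at $y$ is harmless because $w_{s,t}(y)=t-s<\infty$. Testing the distributional identity $-U^{ij}v_{ij}=\delta_y$ against $w_{s,t}$, and using $\nabla w_{s,t}=\nabla v\cdot\chi_{\{s<v<t\}}$, gives the energy identity
$$\int_{\{s<v<t\}}U^{ij}v_i v_j\,dx=t-s.$$

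\textbf{Step 2 (Monge-Amp\`ere Sobolev inequality).} The heart of the argument is the weighted Sobolev inequality
$$\Bigl(\int_V|w|^{q}\,d\mu\Bigr)^{1/q}\le C\Bigl(\int_V U^{ij}w_i w_j\,dx\Bigr)^{1/2}\qquad\text{for all }w\in H^1_0(V),$$
with critical exponent $q=\dfrac{2n(1+\theta)}{(n-1)(1+\theta)-1}$, which reduces to the classical $2n/(n-2)$ when $\theta=0$ and $\mu\asymp dx$. I would establish this by combining three ingredients: an Aleksandrov-type maximum principle on sections, giving pointwise control of $w$ in terms of the Monge-Amp\`ere energy integrated on a section; a Vitali/Besicovitch-style covering of superlevel sets by sections of $u$, available since sections enjoy the engulfing property; and the $A_\infty$-type condition \textbf{CG}, which converts Lebesgue-measure estimates for the covering into $\mu$-estimates with a controlled distortion. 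The lower size bound $C_1|S_u|^{1+\theta}\le\mu(S_u)$ is exactly what shifts the effective Sobolev exponent from $2n/(n-2)$ to $q$, while the upper bound $\mu(S_u)\le C_2|S_u|^{1/(n-1)+\sigma}$ keeps $\mu$ dispersed enough for the covering to be efficient.

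\textbf{Step 3 (From Sobolev to weak-type estimate).} Apply the Sobolev inequality to $w_{s,t}$. By Step 1, the right-hand side is at most $C\sqrt{t-s}$; since $w_{s,t}\ge t-s$ on $\{v>t\}$, the left-hand side is at least $(t-s)\,\mu(\{v>t\})^{1/q}$. Rearranging,
$$\mu(\{v>t\})\le C^{q}(t-s)^{-q/2}.$$
Choosing $s=t/2$ produces $\mu(\{v>t\})\le K\,t^{-q/2}$, with $q/2=\dfrac{n(1+\theta)}{(n-1)(1+\theta)-1}$, which is exactly the claimed exponent.

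\textbf{Main obstacle.} Steps 1 and 3 are routine; the hard work is Step 2. Because $L_u$ is genuinely degenerate, one cannot import the Euclidean Sobolev inequality directly, and must build everything on sections. The structural hypothesis \textbf{CG} is an $A_\infty$-type condition rather than a quantitative Muckenhoupt $A_p$ bound, so Lebesgue-to-$\mu$ transfers are soft and must be packaged carefully through a Calder\'on--Zygmund-type decomposition on sections; extracting the precise critical exponent $q$ from the two-sided polynomial comparison between $\mu(S_u)$ and $|S_u|$ is where the interplay between $\theta$, $\sigma$, and the dimension $n$ really enters.
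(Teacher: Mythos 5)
The paper does not contain a proof of Proposition \ref{TiWlem}: it is cited verbatim from Tian--Wang \cite{TiW} and used as a black box (see Remark \ref{TiWrm}), so there is no internal argument to compare against. Your Steps 1 and 3 are correct and are the standard Stampacchia truncation scheme: the energy identity $\int_{\{s<v<t\}}U^{ij}v_iv_j\,dx = t-s$ follows from testing $L_u v = \delta_y$ against $w_{s,t}\in H_0^1(V)$ and noting $\nabla w_{s,t}=\nabla v\,\chi_{\{s<v<t\}}$, and the Chebyshev step correctly converts the hypothesized Sobolev inequality with exponent $q=\frac{2n(1+\theta)}{(n-1)(1+\theta)-1}$ into the stated decay $\mu(\{v>t\})\le K t^{-q/2}$.

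The genuine gap is Step 2, and it is not a small one. You state a weighted Sobolev inequality $\bigl(\int_V|w|^q\,d\mu\bigr)^{1/q}\le C\bigl(\int_V U^{ij}w_iw_j\,dx\bigr)^{1/2}$ but give only a list of tools you expect to use, not an argument. Two specific concerns. First, the Aleksandrov (ABP) maximum principle you invoke controls $\sup|w|$ by $\|L_u w/\det U\|_{L^n}$; it is a non-divergence, $L^n$-to-$L^\infty$ tool and does not by itself produce an $H^1$-to-$L^q$ embedding, so the passage from ABP plus a sectional covering to the inequality you need is exactly where the content of the proposition lives and must be written out. Second, and more seriously, the Sobolev inequality for the Monge--Amp\`ere quasi-metric structure is, in the literature (\cite{TiW,MRL}, see also the discussion after Theorem \ref{Gthm}), \emph{derived from} Green's-function or distribution-function estimates of precisely the form you are trying to prove, not the other way around. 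Unless you can give an independent proof of your Step-2 inequality that does not rely on such Green's-function decay, the scheme is circular. As written, the proposal establishes the implication ``Sobolev inequality $\Rightarrow$ distribution estimate,'' which is the routine direction, and leaves the hard direction unaddressed.
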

When $\mu$ satisfies (\ref{muDC}) only, and $V= S_u(x, t)$, Maldonado \cite{MRL} obtained a similar result on the decay estimate for the distribution function of $g_V$.
His result can be stated as follows.
\begin{propo}(\cite[Theorem 3]{MRL})
 \label{Mthm} Suppose $V= S_u(x, t)\subset\subset \Omega$.
 There exists a constant $K_1$ depending only on $n,\alpha,\beta$ such that for all $z\in S_u(x,t/2)$, we have
 $$\mu(\{y\in V: g_V(y, z)>T\}) \leq K_1 (\mu(S_u(x, t)))^{-\frac{1}{n-1}} t^{\frac{n}{n-1}} T^{-\frac{n}{n-1}}~\forall T>0.$$
\end{propo}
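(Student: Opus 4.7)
My plan is to deduce Proposition \ref{Mthm} from an affine-invariant Aleksandrov--Bakelman--Pucci (ABP) type estimate for the linearized Monge-Amp\`ere operator, combined with a standard duality argument through the Green's function representation formula.

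The main intermediate step I would prove is the following $L^\infty$--$L^n(\mu)$ bound: if $v\in C^2(V)\cap C(\bar V)$ solves $L_u v = f\,d\mu$ in $V=S_u(x,t)$ with $v=0$ on $\partial V$, then
$$\sup_V|v|\;\le\;C(n,\alpha,\beta)\,t\,\mu(S_u(x,t))^{-1/n}\,\|f\|_{L^n(V,d\mu)}. \quad (\ast)$$
To prove $(\ast)$, I would rewrite the equation in non-divergence form as $-(D^2u)^{-1}_{ij} v_{ij}=f$ and apply the classical ABP to it. The usual ABP weight $1/(\det a)^{1/n}$ with $a=(D^2u)^{-1}$ equals $(\det D^2 u)^{1/n}$, so the Lebesgue integral in ABP regroups exactly as $\|f\|_{L^n(d\mu)}$. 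The remaining Euclidean-diameter factor is handled by a John-lemma normalization of the section: choose an affine map $T$ with $B_1\subset T(S_u(x,t))\subset B_n$ and rescale to a normalized potential $\tilde u$ so that $S_{\tilde u}(0,1)=T(S_u(x,t))$. The doubling condition (\ref{muDC}) forces $\tilde\mu(S_{\tilde u}(0,1))\sim 1$ with constants depending only on $\alpha,\beta$, so classical ABP on the normalized section returns a constant depending only on $n,\alpha,\beta$, and scaling back produces the affine-invariant factor $t\,\mu(S_u(x,t))^{-1/n}$.

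Once $(\ast)$ is in hand, the rest is standard duality. Since $(U^{ij})$ is divergence-free, $L_u$ is formally self-adjoint, and integration by parts yields the representation
$$v(z)=\int_V g_V(y,z)\,f(y)\,d\mu(y)\qquad\text{for every } z\in V.$$
From $(\ast)$ the linear functional $f\mapsto v(z)$ is bounded on $L^n(V,d\mu)$ with operator norm at most $Ct\,\mu(S_u(x,t))^{-1/n}$, so by $L^n$--$L^{n/(n-1)}$ duality,
$$\|g_V(\cdot,z)\|_{L^{n/(n-1)}(V,d\mu)}\le C(n,\alpha,\beta)\,t\,\mu(S_u(x,t))^{-1/n}.$$
Chebyshev's inequality then converts this into the distribution-function bound
$$\mu(\{g_V(\cdot,z)>T\})\le T^{-n/(n-1)}\,\|g_V(\cdot,z)\|_{L^{n/(n-1)}}^{n/(n-1)}\le K_1\,\mu(S_u(x,t))^{-1/(n-1)}\,t^{n/(n-1)}\,T^{-n/(n-1)},$$
which is exactly the claimed estimate.

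The main obstacle is the proof of $(\ast)$ with the sharp affine-invariant scaling and constants depending only on $n,\alpha,\beta$. The classical ABP naturally features the Euclidean diameter of $V$, which is not controlled by section geometry alone; the John-lemma normalization removes this, but one must carefully verify that the rescaled measure $\tilde\mu$ satisfies the same doubling property with the same constants and that $\|f\|_{L^n(d\mu)}$ transforms with the correct Jacobian (this relies implicitly on the comparison $|S_u(x,t)|\cdot\mu(S_u(x,t))\sim t^n$, which is itself a consequence of the doubling hypothesis on $\mu$). The assumption $z\in S_u(x,t/2)$ is not needed for the $L^{n/(n-1)}$ bound per se---which is a global $V$-statement---but it keeps $z$ a definite quasi-distance from $\partial V$, ensuring the singularity of $g_V(\cdot,z)$ is genuinely interior.
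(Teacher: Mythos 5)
The paper cites Proposition~\ref{Mthm} directly from Maldonado \cite{MRL} (Theorem~3) and supplies no proof, so there is no in-paper argument to compare against. That said, your ABP-plus-duality-plus-Chebyshev plan is essentially Maldonado's route, and the first half of it already appears (in the simpler bounded-density setting $\lambda\le f\le\Lambda$) inside the paper's own proof of Lemma~\ref{smallV}: there the author applies ABP to the adjoint problem $-U^{ij}\psi_{ij}=\varphi$, uses $\det U=(\det D^2u)^{n-1}$ to identify the ABP weight with $d\mu$, and dualizes to obtain the $L^{n/(n-1)}$ bound on $g_V(\cdot,x_0)$, remarking that it is ``essentially inequality (2.3) in \cite{MRL}.'' The two points you flag as needing care are exactly the nontrivial ones, and both are standard: the size-of-sections estimate $|S_u(x,t)|\cdot\mu(S_u(x,t))\sim t^n$ is a consequence of the doubling hypothesis, it is precisely what makes $\tilde\mu(S_{\tilde u}(0,1))\sim 1$ after your John normalization, and undoing that normalization converts the Euclidean diameter in the classical ABP into the affine-invariant factor $t\,\mu(S_u(x,t))^{-1/n}$ with constants depending only on $n,\alpha,\beta$. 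Your side observation is also correct: since the dual ABP estimate gives $\|g_V(\cdot,z)\|_{L^{n/(n-1)}(V,d\mu)}\le C\,t\,\mu(S_u(x,t))^{-1/n}$ uniformly in $z\in V$, the hypothesis $z\in S_u(x,t/2)$ plays no role in this derivation; it is inherited from Maldonado's original formulation rather than forced by the argument.
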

\begin{remark} 
\label{TiWrm}
\begin{myindentpar}{1cm}
1. If $u$ satisfies (\ref{MAeq}) and (\ref{fbound}), then
in dimensions $n\geq 3$, Proposition \ref{TiWlem} gives a sharp upper bound for $g_V$. In particular, for small $t$ and $x\in\p S_u(x_0, t)$, we have
$$g_V(x, x_0)\leq Ct^{-\frac{n-2}{2}}.$$
2. If $u$ satisfies (\ref{MAeq}) and (\ref{muDC}), then Proposition \ref{Mthm} gives a sharp upper bound for $g_V$ in dimensions $n\geq 3$ when V is a section of $u$. When $V= S_u(x_0, t)$,  we have
$$g_V(x, x_0) \leq K_1^{\frac{n-1}{n}} t[\mu(S_u(x_0, t)]^{-\frac{1}{n}} [\mu(S_u(x_0, s))]^{-\frac{n-1}{n}}~\forall~x\in\p S_u(x_0, s)~(0<s<t).$$
In particular, by Lemma \ref{muDP}, we have
$$g_V(x, x_0) \leq C(K_1,\alpha,\beta) t[\mu(S_u(x_0, t)]^{-1}~\forall~x\in\p S_u(x_0, t/2).$$
\end{myindentpar}
\end{remark}
For reader's convenient, we will prove the estimates in this remark in Section \ref{sec_n3}. 
\vglue 0.2cm

The proof of (\ref{GLSW}) in \cite{LSW} was based on potential theory employing capacity and the fundamental result of 
De Giorgi-Nash-Moser on H\"older continuity of solutions of uniformly elliptic equations in divergence form. Our proof of Theorem \ref{Gthm}(i) is based on the 
fundamental result of Caffarelli-Guti\'errez \cite{CG} on H\"older continuity of solutions of the linearized Monge-Amp\`ere equation. We find a direct argument for Theorem \ref{Gthm}(i) without using capacity; see Section \ref{sec_n3}. We also provide an alternate proof for the lower bound of the Green's function in Theorem \ref{Gthm} using 
capacity; see Section \ref{sec_n2}. This potential theoretic approach works for general doubling Monge-Amp\`ere measures, thus allowing us to prove  Theorem \ref{Gthm}(ii); one of the key
ingredients here is Maldonado's Harnack inequality \cite{MCV} for linearized Monge-Amnp\`ere equations under a doubling condition. The proof of Theorem \ref{Gthm} (iii) makes use of De Philippis-Figalli-Savin
and Schmidt's $W^{2, 1+\e}$ estimates \cite{DPFS, Sch} for the Monge-Amp\`ere equation that are valid for all dimensions and the $L^q$ integrability of the Green's function for all finite $q$ in two
dimensions.
\section{Preliminaries}
Throughout, we denote by $c$, $C$ positive constants depending on $\lambda$, $\Lambda$, 
$n$, $\alpha, \beta$, and their values may change from line to line whenever 
there is no possibility of confusion. We refer to such constants as {\it universal constants}.
\subsection{Monge-Amp\`ere measure bounded away from $0$ and $\infty$}
In this section, we assume that
$$\lambda\leq \det D^2 u\leq\Lambda~\text{in}~\Omega.$$
Throughout, we use the following volume growth for compactly supported sections:
\begin{lemma} \label{vol_lem} If $S_u(x, t)\subset\subset \Omega$ then
$$c_1(n,\lambda,\Lambda)t^{\frac{n}{2}}\leq |S_u(x, t)|\leq C_1(n,\lambda,\Lambda)t^{\frac{n}{2}}.$$
\end{lemma}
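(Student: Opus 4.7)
\medskip

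\noindent\textbf{Proof proposal for Lemma \ref{vol_lem}.} The plan is to reduce the estimate to a normalized situation via an affine change of coordinates, and then extract the two inequalities from the Aleksandrov maximum principle and a gradient bound, respectively.

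First, I would apply John's lemma to the convex set $S=S_u(x,t)$ to obtain an ellipsoid $E=x^{*}+A(B_{1})$ with $A$ a linear map such that $E\subset S\subset x^{*}+nA(B_{1})$. In particular $|S|$ is comparable to $|E|=c_{n}\det A$, so it suffices to show $c\,t^{n/2}\le \det A\le C\,t^{n/2}$. Next, I would normalize by setting
$$\tilde u(y)=u(x+Ay)-u(x)-\nabla u(x)\cdot(Ay),$$
so that $\tilde S:=\{\tilde u<t\}=A^{-1}(S-x)$ lies between $B_{1}(y_{0})$ and $B_{n}(y_{0})$ for some $y_{0}$, and a direct differentiation gives $\det D^{2}\tilde u(y)=(\det A)^{2}\det D^{2}u(x+Ay)$. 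Combining with \eqref{fbound}, we have the key identity
$$\lambda(\det A)^{2}\le \det D^{2}\tilde u\le \Lambda(\det A)^{2}\qquad\text{on }\tilde S.$$

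For the lower bound on $|S|$, I would apply the Aleksandrov maximum principle to the convex function $v=\tilde u-t$, which vanishes on $\partial\tilde S$, attains the value $-t$ at some interior point $y_{\ast}$, and whose Monge-Amp\`ere measure is $\det D^{2}\tilde u\,dy$. Since $\mathrm{diam}(\tilde S)$ and $\mathrm{dist}(y_{\ast},\partial\tilde S)$ are bounded by dimensional constants, this yields
$$t^{n}=|v(y_{\ast})|^{n}\le C_{n}\!\int_{\tilde S}\det D^{2}\tilde u\le C_{n}\Lambda(\det A)^{2}|\tilde S|\le C_{n}\Lambda(\det A)^{2},$$
hence $\det A\ge c(n,\Lambda)\,t^{n/2}$ and therefore $|S|\ge c(n,\Lambda)\,t^{n/2}$.

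For the upper bound on $|S|$, I would use the gradient bound that convexity of $\tilde u$ together with $0\le\tilde u\le t$ on $\tilde S\supset B_{1}(y_{0})$ forces $|\nabla\tilde u|\le C_{n}t$ on $B_{1/2}(y_{0})$. Consequently $\partial\tilde u(B_{1/2}(y_{0}))\subset B(0,C_{n}t)$ and so
$$\lambda(\det A)^{2}|B_{1/2}|\le \int_{B_{1/2}(y_{0})}\det D^{2}\tilde u=|\partial\tilde u(B_{1/2}(y_{0}))|\le C_{n}t^{n},$$
giving $\det A\le C(n,\lambda)\,t^{n/2}$, hence $|S|\le C(n,\lambda)\,t^{n/2}$.

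There is no real obstacle here: the argument is entirely standard, and the only mild point of care is keeping track of the fact that the John ellipsoid is centered at $x^{*}\neq x$, so that after the affine normalization the relevant ball is $B_{1}(y_{0})$ rather than $B_{1}(0)$; since the Aleksandrov estimate and the subdifferential bound are translation invariant, this causes no complications.
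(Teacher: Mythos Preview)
The paper does not actually prove Lemma~\ref{vol_lem}: it is stated in the preliminaries as a standard fact about sections of solutions to the Monge--Amp\`ere equation with pinched determinant (it goes back to Caffarelli's work and appears, for instance, in Guti\'errez's book \cite{G}). So there is no ``paper's own proof'' to compare against.

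Your argument is correct and is precisely the standard one: John's lemma to normalize the section, the identity $\det D^{2}\tilde u=(\det A)^{2}\det D^{2}u$, then the Aleksandrov maximum principle for the lower bound on $\det A$ and the convexity gradient estimate plus the subdifferential image bound for the upper bound. The only place to be slightly careful is the lower bound step: the minimum of $v=\tilde u-t$ is attained at $y_{*}=0$ (since $\tilde u\ge 0$ and $\tilde u(0)=0$), and one does not a priori control $\mathrm{dist}(0,\partial\tilde S)$ from below; but this is irrelevant, since in the Aleksandrov inequality $|v(y_{*})|^{n}\le C_{n}\,\mathrm{diam}(\tilde S)^{n-1}\,\mathrm{dist}(y_{*},\partial\tilde S)\int_{\tilde S}\det D^{2}\tilde u$ you only need an \emph{upper} bound on the geometric factors, and both are bounded by $2n$. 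Your last paragraph already handles the only other subtlety (the shift $y_{0}=A^{-1}(x^{*}-x)$). The proof is fine as written.
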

The Caffarelli-Guti\'errez's Harnack inequality for the linearized Monge-Amp\`ere equation states:
\begin{theorem}(\cite{CG}) \label{Holder_thm}
For each compactly supported section $S_u (x, t)\subset\subset\Omega$, and any nonnegative solution $v$ of $L_u v=0$ in $S_u (x, t)$, we have
$$\sup_{S_u(x,\tau t)} v\leq C \inf_{S_u(x, \tau t)} v$$
for universal $\tau, C.$
\end{theorem}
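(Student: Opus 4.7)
The plan is to follow the Krylov-Safonov route to Harnack's inequality, but with the sections $S_u(x,t)$ replacing Euclidean balls as the ``natural balls'' for $L_u$. The starting point is the affine invariance of both the Monge-Amp\`ere equation and its linearization (noted earlier in the excerpt), which combined with John's lemma lets us assume, after a unimodular rescaling, that any given section is \emph{normalized}, i.e.\ comparable to a fixed Euclidean ball. This reduces essentially all estimates to such a normalized model section, so the output constants become universal.

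The core technical ingredient is an ABP-type maximum principle tailored to $L_u$. If $w$ is a subsolution with $L_u w\leq 0$ in a normalized section, one bounds $\sup w$ by an integral over the contact set of its concave envelope. The divergence form $L_u v=-(U^{ij}v)_{ij}$ together with the identity $\det U=(\det D^2 u)^{n-1}$ allows that integral to be controlled by the Monge-Amp\`ere measure $\det D^2 u\, dx$ rather than by Lebesgue measure, which is precisely what compensates for the lack of uniform ellipticity; the lower bound in (\ref{fbound}) (or doubling in (\ref{muDC})) enters the whole argument only at this single point.

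With this ABP in hand, I would proceed in two stages. Stage 1 is a critical density estimate: if $v\geq 0$ satisfies $L_u v=0$ in $S_u(x,t)$ and $\inf_{S_u(x,\tau t)}v\leq 1$, then $\{v\leq M\}$ occupies a definite $\mu$-fraction of $S_u(x,t)$ for a universal $M$. Stage 2 iterates this via the \emph{engulfing property} of sections --- there exists a universal $\theta>1$ such that $y\in S_u(x,t)$ implies $S_u(x,t)\subset S_u(y,\theta t)$ --- combined with a Besicovitch-type covering lemma for sections and the doubling of $\mu$. A Calder\'on-Zygmund type decomposition in the quasi-metric structure of $(\Omega,\mu)$ then yields a power decay $\mu(\{v>s\}\cap S_u(x,\tau t))\leq Cs^{-\e}\mu(S_u(x,t))$, i.e.\ a weak Harnack inequality. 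A symmetric $L^{\infty}$ bound for nonnegative subsolutions, proved the same way from the ABP, gives the full Harnack inequality; all constants remain universal because every step is invariant under the normalizing rescaling.

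The main obstacle is the ABP step itself: one must show that the Monge-Amp\`ere measure of the convex envelope of $-v$ is controlled by $L_u v$ despite $L_u$ being only degenerate elliptic. This rests on a careful analysis of the contact set and on the algebraic identity $\det U=(\det D^2 u)^{n-1}$, and it is where the affine-invariant ``Monge-Amp\`ere reference measure'' fundamentally replaces Lebesgue measure. A secondary difficulty is that sections do not tile nicely, so the usual cube-based Calder\'on-Zygmund decomposition must be replaced by a genuine covering argument compatible with the doubling property of $\mu$ and the engulfing property of sections.
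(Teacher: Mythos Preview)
The paper does not prove this theorem; it is quoted from \cite{CG} as a background result and used as a black box. Your outline is a faithful sketch of the original Caffarelli--Guti\'errez argument (normalization via John's lemma, an ABP-type estimate with $\det U=(\det D^2 u)^{n-1}$, critical density, engulfing and covering, power decay), so there is no discrepancy to report---there simply is no in-paper proof to compare against.
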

Since the 
linearized Monge-Amp\`ere operator $L_u v$ can be written in both divergence form and non-divergence form,  Caffarelli-Guti\'errez's theorem is the affine 
invariant analogue of De Giorgi-Nash-Moser's theorem and also Krylov-Safonov's theorem on H\"older continuity of solutions of uniformly elliptic equations in nondivergence 
form. Theorem \ref{Holder_thm} will play an important role in our proof of the main result.

We also need the following Vitali type covering lemma.
\begin{lemma}[Vitali covering, \cite{DPFS}] Let $D$ be a compact set in $\Omega$ and assume that to each $x\in D$ we associate a corresponding section $S_u(x, h)\subset\subset\Omega$. Then we can find a finite number of these sections $S_u(x_i, h_i), i=1,\cdots, m,$ such that
$$D \subset \bigcup_{i=1}^m S_u(x_i, h_i),~\text{with}~ S_u(x_i, \delta h_i)~\text{disjoint},$$
where $\delta>0$ is a small constant that depends only on $\lambda$, $\Lambda$ and $n$.
\label{cov_lem}
\end{lemma}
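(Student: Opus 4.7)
The plan is to run the standard Vitali selection on the given family (after shrinking each height by a universal factor), and then use the Caffarelli-Guti\'errez engulfing property of Monge-Amp\`ere sections to promote the resulting cover back to the original heights. The engulfing property, valid under $\lambda \leq \det D^2 u \leq \Lambda$, provides a universal constant $\theta = \theta(\lambda,\Lambda,n) > 1$ such that whenever $S_u(y, h') \cap S_u(x, h) \neq \emptyset$ with $h' \leq h$, one has $S_u(y, h') \subset S_u(x, \theta h)$. This is the Monge-Amp\`ere analogue of the triangle inequality that drives the classical Vitali argument in the Euclidean setting, and it is the only non-trivial input.

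Since $D$ is compact and each $S_u(x, h_x)$ is open, the shrunken family $\{S_u(x, h_x/\theta)\}_{x \in D}$ still trivially covers $D$ (each point lies in its own section). I would extract a finite subcover $\{S_u(x_\alpha, h_\alpha/\theta)\}_{\alpha=1}^N$, fix any $\delta_0 \in (0,1)$ (say $\delta_0 = 1/2$), sort by decreasing height $h_1 \geq \cdots \geq h_N$, and greedily select: include $x_1$; for $\alpha \geq 2$, include $x_\alpha$ provided $S_u(x_\alpha, \delta_0 h_\alpha/\theta)$ is disjoint from $S_u(x_j, \delta_0 h_j/\theta)$ for every previously selected index $j$. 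Call the selected indices $\alpha_1, \ldots, \alpha_m$.

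By construction the sections $S_u(x_{\alpha_i}, \delta_0 h_{\alpha_i}/\theta)$ are pairwise disjoint, which yields the disjointness claim with $\delta := \delta_0/\theta$. For the covering property, let $x \in D$ and pick $\alpha$ with $x \in S_u(x_\alpha, h_\alpha/\theta)$. If $\alpha$ is selected, we are done; otherwise rejection of $x_\alpha$ yields a selected $\alpha_j$, necessarily with $h_{\alpha_j} \geq h_\alpha$, such that $S_u(x_\alpha, \delta_0 h_\alpha/\theta) \cap S_u(x_{\alpha_j}, \delta_0 h_{\alpha_j}/\theta) \neq \emptyset$, whence a fortiori $S_u(x_\alpha, h_\alpha/\theta) \cap S_u(x_{\alpha_j}, h_{\alpha_j}/\theta) \neq \emptyset$. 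Engulfing now gives $S_u(x_\alpha, h_\alpha/\theta) \subset S_u(x_{\alpha_j}, \theta \cdot h_{\alpha_j}/\theta) = S_u(x_{\alpha_j}, h_{\alpha_j})$, so $x$ lies in the selected section at its original height.

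The main obstacle is invoking the engulfing property; once it is in hand, the argument is a routine bookkeeping exercise. A minor but essential care point is to perform the Vitali step on the shrunken family $\{S_u(x, h_x/\theta)\}$ rather than on the original family: this is exactly what allows the coverage to be stated in terms of the original heights $h_i$ while preserving disjointness of the sub-sections $S_u(x_i, \delta h_i)$ with $\delta = \delta_0/\theta$ depending only on $\lambda, \Lambda, n$.
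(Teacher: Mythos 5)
The paper does not prove Lemma~\ref{cov_lem}; it is stated with a citation to De Philippis--Figalli--Savin \cite{DPFS} and used as a black box. So there is no in-paper argument to compare against, and what matters is whether your reconstruction is sound.

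It is. You correctly identify the engulfing property as the single nontrivial input: under $\lambda\le \det D^2u\le\Lambda$ (in fact already under the doubling condition) there is a universal $\theta>1$ such that intersecting sections $S_u(y,h')\cap S_u(x,h)\neq\emptyset$ with $h'\le h$ satisfy $S_u(y,h')\subset S_u(x,\theta h)$; this is Corollary~3.3.9 in Guti\'errez's book and replaces the triangle inequality in the Euclidean Vitali argument. The device of running the greedy selection on the $\theta$-shrunken family, so that the engulfed inclusion lands exactly at the original height $h_{\alpha_j}$ rather than at $\theta h_{\alpha_j}$, is the right bookkeeping to match the exact formulation in the lemma (cover by the \emph{given} sections, disjointness of their $\delta$-shrinkings), and your final $\delta=\delta_0/\theta$ depends only on $n,\lambda,\Lambda$ as required. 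One small caveat worth being aware of: the ``intersecting sections'' variant of engulfing you state is usually derived from the pointwise version ($y\in S_u(x,t)\Rightarrow S_u(x,t)\subset S_u(y,\theta_* t)$) and comes with a power of the underlying engulfing constant (something like $\theta=\theta_*^2$), not with the same constant; this affects nothing in your argument since you never use a specific value, but it is good hygiene to cite the precise form used. Also, since the sections in this paper are defined as subsets of $\overline{\Omega}$, one should technically use their relative openness (guaranteed by $S_u(x,h)\subset\subset\Omega$) to extract the finite subcover, but this is harmless. Overall the proposal is a correct and complete proof.
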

\subsection{ Monge-Amp\`ere measure satisfying a doubling condition}
In this section, we assume that 
$\det D^2 u=\mu$
where $\mu$ satisfies (\ref{muDC}). Then $\mu$ is {\it doubling with respect to the parameter} on the sections of $u$:
\begin{lemma}\cite[Corollary 3.3.2]{G}  \label{muDP}
If $S_u(x, 2t)\subset\subset \Omega$ then
there is a constant $\beta'$ depending only on $n, \beta$ and $\alpha$ such that
$$\mu(S_u(x, 2t))\leq \beta^{'}\mu(S_u(x, t)).$$
\end{lemma}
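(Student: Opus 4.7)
The plan is to deduce (\ref{muDP}) directly from the stronger doubling hypothesis (\ref{muDC}). First I would apply (\ref{muDC}) with the height parameter $2t$ in place of $t$, which gives
$$\mu(S_u(x,2t)) \leq \beta\, \mu(\alpha S_u(x,t)).$$
The remaining task is to compare $\mu(\alpha S_u(x,t))$ with $\mu(S_u(x,t))$.

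For this I would use a purely geometric observation: the $\alpha$-dilation with respect to the center of mass of a convex set is contained in the set whenever $0<\alpha<1$. Writing $x^\ast$ for the center of mass of the convex set $S:=S_u(x,t)$, one has $x^\ast\in S$ (the center of mass of a convex set always lies inside it). Then for any $y\in S$, the dilated point $x^\ast+\alpha(y-x^\ast)=(1-\alpha)x^\ast+\alpha y$ is a convex combination of $x^\ast$ and $y$, both in $S$, and hence lies in $S$ by convexity. Consequently $\alpha S_u(x,t)\subset S_u(x,t)$ and therefore $\mu(\alpha S_u(x,t))\leq \mu(S_u(x,t))$.

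Chaining the two inequalities yields $\mu(S_u(x,2t))\leq\beta\,\mu(S_u(x,t))$, so the conclusion holds with $\beta':=\beta$. There is no genuine obstacle here; the only subtlety is recognizing that center-of-mass contraction preserves the inclusion in a convex set, which is precisely what makes (\ref{muDC}) strong enough by itself to force the customary doubling on the parameter. In particular this approach does not need to invoke either $n$ or $\alpha$ in the final constant, though the statement of the lemma as cited leaves room for a more delicate dependence in case one prefers a proof that passes through intermediate geometric lemmas about sections (for instance, chaining (\ref{muDC}) across a John-type decomposition of $S_u(x,2t)$ into pieces comparable to $\alpha S_u(x,t)$, which would naturally bring $n$ and $\alpha$ into the constant).
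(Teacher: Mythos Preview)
Your argument is correct as written: applying (\ref{muDC}) at height $2t$ gives $\mu(S_u(x,2t))\le\beta\,\mu(\alpha S_u(x,t))$, and since sections are convex and the center of mass of a convex body lies in its interior, the $\alpha$-contraction about that center stays inside, so $\mu(\alpha S_u(x,t))\le\mu(S_u(x,t))$. This yields the conclusion with $\beta'=\beta$.

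Note, however, that the paper does not supply its own proof of this lemma; it simply quotes \cite[Corollary~3.3.2]{G}. In Guti\'errez's book the hypothesis ``doubling with respect to the center of mass'' is formulated as $\mu(S_u(x,t))\le\beta\,\mu(\alpha S_u(x,t))$, i.e.\ the same height on both sides and only the set is shrunk. From that weaker hypothesis the passage to doubling in the height parameter is genuinely nontrivial: one has to show that the half-height section $S_u(x,t)$ swallows a fixed $\alpha$-contraction of $S_u(x,2t)$, which goes through the engulfing property of sections and John's lemma, and this is where the dependence of $\beta'$ on $n$ and $\alpha$ enters. The formulation (\ref{muDC}) in the present paper already has the $t/2$ built in, so you are effectively assuming a stronger condition than the one Guti\'errez starts from; that is why your argument is a one-liner and produces $\beta'=\beta$ with no $n,\alpha$ dependence. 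Your last paragraph correctly anticipates this: the cited proof does pass through intermediate geometric lemmas about sections, and that, not any defect in your reasoning, accounts for the extra constants in the statement.
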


Maldonado \cite{MCV}, extending the work of Caffarelli-Guti\'errez,  proved the following Harnack inequality for the linearized Monge-Amp\`ere under minimal geometric condition, namely, the doubling condition (\ref{muDC}).
\begin{theorem}(\cite{MCV})
For each compactly supported section $S_u (x, t)\subset\subset\Omega$, and any nonnegative solution $v$ of $L_u v=0$ in $S_u (x, t)$, we have for 
$$\sup_{S_u(x,\tau t)} v\leq C \inf_{S_u(x, \tau t)} v$$
for universal $\tau, C$ depending only on $n, \beta$ and $\alpha$.
\label{MHolder_thm}
\end{theorem}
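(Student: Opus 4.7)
The plan is to carry Caffarelli--Guti\'errez's proof of Theorem \ref{Holder_thm} over to the more general setting in which the only hypothesis on $\mu = \det D^2 u$ is the doubling condition (\ref{muDC}). The three classical building blocks of the Krylov--Safonov / Caffarelli--Guti\'errez machinery must each be rebuilt with $\mu$ (rather than Lebesgue measure) playing the role of the reference measure on sections, since under (\ref{muDC}) only $\mu$ interacts nicely with dilations of sections.

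First I would gather the geometric preliminaries for sections under doubling: John's lemma gives, after an affine unimodular normalization, that $S_u(x_0,t)$ is comparable to a Euclidean ball; Lemma \ref{muDP} upgrades (\ref{muDC}) to doubling with respect to the parameter; and the engulfing property of sections (if $y \in S_u(x,t)$ then $S_u(x,t) \subset S_u(y,Kt)$ for some universal $K$) follows from the strict convexity of $u$ guaranteed by doubling. These three facts allow sections to be treated like metric balls in a $\mu$-space of homogeneous type, and together they deliver a Vitali-type covering lemma at the level of $\mu$ (the analogue of Lemma \ref{cov_lem}).

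Next I would prove the key critical-density estimate: there exist universal $M>1$ and $\eta \in (0,1)$, depending only on $n, \alpha, \beta$, such that for any nonnegative $L_u$-supersolution $v$ in a normalized section $S_u(x_0,t)$ with $\inf_{S_u(x_0, \tau_0 t)} v \le 1$,
\[
\mu\bigl(\{v \le M\} \cap S_u(x_0, \tau_0 t)\bigr) \ge \eta\, \mu\bigl(S_u(x_0, \tau_0 t)\bigr).
\]
This is obtained by testing against the convex envelope of $-v$ and invoking the Aleksandrov--Bakelman--Pucci estimate adapted to the Monge--Amp\`ere geometry, using the identity $\det U = (\det D^2 u)^{n-1} = \mu^{n-1}$ together with the divergence form $L_u v = -(U^{ij} v)_{ij}$; this is what turns the classical $L^n$-type ABP, read against Lebesgue measure, into an estimate read against $\mu$. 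Iterating this estimate through the Vitali covering on sections produces the power-decay bound
\[
\mu\bigl(\{v > s\} \cap S_u(x_0, \tau t)\bigr) \le C\, s^{-\varepsilon}\, \mu\bigl(S_u(x_0, t)\bigr), \qquad s \ge 1,
\]
for universal $\varepsilon, C, \tau$. Finally, combining this weak-type estimate applied to a supersolution with a scale-by-scale expansion of positivity for subsolutions (propagated along chains of overlapping sections via engulfing) yields $\sup_{S_u(x_0,\tau t)} v \le C \inf_{S_u(x_0,\tau t)} v$ as claimed.

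The main obstacle is the critical-density step. Without the lower bound $\det D^2 u \ge \lambda > 0$ used by Caffarelli--Guti\'errez, the standard ABP estimate cannot be applied directly, and one must control the $\mu$-measure of the contact set of the convex envelope instead of its Lebesgue measure. Making this work requires exploiting both the divergence structure of $L_u$ and the self-improving properties of the doubling measure $\mu$ on sections; handling this substitution cleanly, while keeping all constants depending only on $n, \alpha, \beta$, is the technical heart of Maldonado's argument in \cite{MCV}.
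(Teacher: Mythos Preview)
The paper does not prove this theorem at all: it is simply quoted from Maldonado \cite{MCV} and used as a black box (in particular in the sketch of Theorem \ref{cap_thm}). So there is no ``paper's own proof'' to compare your proposal against.

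Your sketch is a reasonable high-level outline of how the Caffarelli--Guti\'errez machinery is adapted to the doubling setting, and you correctly identify the main obstacle, namely replacing the Lebesgue-measure critical-density/ABP step by one measured in $\mu$. This is indeed the thrust of \cite{MCV}. But as written it remains a plan rather than a proof: the crucial step---obtaining the $\mu$-version of the critical-density estimate without any lower bound $\det D^2 u \ge \lambda$---is asserted but not carried out, and this is precisely the nontrivial content of Maldonado's paper. For the purposes of the present paper, citing \cite{MCV} as the author does is exactly what is required; a full independent proof would amount to reproducing that paper.
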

\section{Bounding the Green's function}
\label{sec_n3}
In this section, we prove Theorem \ref{Gthm}(i) and (iii) and Corollary \ref{RemoveSing}. Assume throughout this section that (\ref{MAeq}) and (\ref{fbound}) are satisfied. 

The proof of Theorem \ref{Gthm}(i) relies on 
three Lemmas \ref{smallV}, \ref{max_lem} and \ref{small_lem}. Lemma \ref{smallV} gives the bounds for the Green's function $g_V(x, x_0)$ 
in the special case where $V$ is itself a section of $u$ centered at $x_0$. Lemma \ref{max_lem} estimates how the maximum of $g_V(x, x_0)$ on a section of
$u$ centered at $x_0$ changes when we pass to a concentric section with double height. Lemma \ref{small_lem} gives the upper bound for $g_V$ near $\p V$.
\begin{lemma}\label{smallV}
If $V= S_u(x_0, t)$ then
$$g_V(x, x_0)\geq c(n,\lambda,\Lambda)t^{-\frac{n-2}{2}}~\forall x\in S_{u}(x_0, t/2)$$
and
$$g_V(x, x_0)\leq C(n,\lambda,\Lambda)t^{-\frac{n-2}{2}}~\forall x\in \p S_{u}(x_0, t/2).$$
\end{lemma}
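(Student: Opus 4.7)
The plan is to reduce the problem to a normalized setting by the affine invariance of the Monge--Amp\`ere and linearized Monge--Amp\`ere equations, and then establish universal upper and lower bounds for the rescaled Green's function. By John's lemma combined with the volume estimate of Lemma \ref{vol_lem}, I choose an affine map $A$ with $|\det A|\sim t^{-n/2}$ so that $A(S_u(x_0,t)-x_0)$ is a normalized convex set containing $B_c$ and contained in $B_C$ for universal $c,C$. Setting
\[
\tilde u(y) := \tfrac{1}{t}\bigl[u(x_0+A^{-1}y)-u(x_0)-\nabla u(x_0)\,A^{-1}y\bigr],
\]
a direct calculation gives $\det D^2\tilde u=\det D^2 u\in[\lambda,\Lambda]$, $S_{\tilde u}(0,s)=A(S_u(x_0,ts)-x_0)$, and the covariance $L_{\tilde u}\tilde v(y)=t\,L_u v(x)$ for $\tilde v(y):=v(x_0+A^{-1}y)$. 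Changing variables in the defining identity $\int g_V\,L_u\phi\,dx=\phi(x_0)$ then yields $g_V(x,x_0)=t^{-(n-2)/2}\,\tilde g(A(x-x_0),0)$, where $\tilde g(\cdot,0)$ is the Green's function of $L_{\tilde u}$ on $S_{\tilde u}(0,1)$ with pole at $0$. The lemma thus reduces to proving $\tilde g(\cdot,0)\geq c$ on $S_{\tilde u}(0,1/2)$ and $\tilde g(\cdot,0)\leq C$ on $\partial S_{\tilde u}(0,1/2)$ for universal constants.

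For the upper bound (in $n\geq 3$; the $n=2$ case is handled analogously using the high $L^p$ integrability of Green's functions in two dimensions), Proposition \ref{TiWlem} applies with $\theta=0$ thanks to Lemma \ref{vol_lem} and gives $|\{\tilde g(\cdot,0)>T\}\cap S_{\tilde u}(0,1)|\leq K T^{-n/(n-2)}$. If $y_0\in\partial S_{\tilde u}(0,1/2)$ had $\tilde g(y_0,0)>T$, then the Caffarelli--Guti\'errez Harnack inequality (Theorem \ref{Holder_thm}) on a section $S_{\tilde u}(y_0,h_0)\subset\subset S_{\tilde u}(0,1)\setminus\{0\}$ of universal size would force $\tilde g>T/C$ throughout $S_{\tilde u}(y_0,\tau h_0)$, whose volume is bounded below universally. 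Comparison with the weak-type estimate forces $T\leq C$.

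For the lower bound, test the identity $\int \tilde g\,L_{\tilde u}\tilde\phi\,dy=\tilde\phi(0)$ against the Lipschitz function $\tilde\phi:=(1/2-\tilde u)_+$, noting $\tilde\phi(0)=1/2$. A distributional calculation, using that $\tilde U$ is divergence free and that $\nabla\tilde\phi$ jumps from $-\nabla\tilde u$ to $0$ across $\partial S_{\tilde u}(0,1/2)$, decomposes $L_{\tilde u}\tilde\phi$ as a bulk part $n\tilde f\,\chi_{S_{\tilde u}(0,1/2)}$ minus a nonnegative surface part $(\tilde U^{ij}\tilde u_i\tilde u_j/|\nabla\tilde u|)\,d\mathcal H^{n-1}$ concentrated on $\partial S_{\tilde u}(0,1/2)$. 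Dropping the favorably signed surface term and using $\tilde f\leq\Lambda$ gives $\int_{S_{\tilde u}(0,1/2)}\tilde g\geq 1/(2n\Lambda)$. Integrating the weak-type decay in the opposite direction yields the anti-concentration bound $\int_{S_{\tilde u}(0,s)}\tilde g\leq C s$ for all small $s$, so for a universal $s_0>0$, $\int_{S_{\tilde u}(0,1/2)\setminus S_{\tilde u}(0,s_0)}\tilde g\geq c>0$, producing a point $y_0$ in this annular region with $\tilde g(y_0,0)\geq c'$. A finite Harnack chain of $\tilde u$-sections contained in the compact set $S_{\tilde u}(0,1)\setminus S_{\tilde u}(0,s_0/2)$ transfers this estimate to all of $\partial S_{\tilde u}(0,1/2)$, and the minimum principle (using $\tilde g\to+\infty$ at $0$) propagates it to the full interior of $S_{\tilde u}(0,1/2)$.

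The hard part is the lower bound: since $\tilde g$ may concentrate near the pole, the integral identity alone provides no pointwise information away from $0$. The anti-concentration bound $\int_{S_{\tilde u}(0,s)}\tilde g\lesssim s$, derived from the same weak-type decay that gives the upper bound, is the key ingredient: it localizes the mass at a universal distance from the pole and thereby allows the Harnack chain argument to produce a universal pointwise lower bound.
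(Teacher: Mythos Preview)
Your argument is correct in outline and shares the paper's core strategy—an integral identity giving $\int g_V \sim t$, an integrability/decay estimate controlling concentration near the pole, and a Harnack chain on an annular region—but the implementation differs in several places. The paper works directly without affine normalization: it tests $L_u\sigma=\delta_{x_0}$ against the linear function $u-t$ (which vanishes on $\partial V$, so no surface term appears) to obtain two-sided bounds $\int_V\sigma\sim t$; it then derives the $L^{n/(n-1)}$ bound $\|\sigma\|_{L^{n/(n-1)}}\le Ct^{1/2}$ by ABP and duality, uses H\"older to excise both a small section near the pole \emph{and} a thin boundary shell, and applies Caffarelli--Guti\'errez via a Vitali covering to get pointwise two-sided bounds on the remaining annulus simultaneously. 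Your route instead normalizes first, invokes the Tian--Wang weak-type estimate (Proposition~\ref{TiWlem}) as a black box for the upper bound and for anti-concentration, and tests against the truncated function $(1/2-\tilde u)_+$, which forces you to handle (and then discard) a surface term. The paper's version is more self-contained: it avoids Proposition~\ref{TiWlem} entirely and treats all dimensions $n\ge2$ uniformly with one argument, whereas your upper-bound step requires a separate treatment when $n=2$ (since Proposition~\ref{TiWlem} needs $\sigma>0$, which fails for $n=2$). On the other hand, your explicit normalization makes the scaling $t^{-(n-2)/2}$ transparent from the outset, and your anti-concentration bound $\int_{S(0,s)}\tilde g\lesssim s$ is a clean way to localize mass away from the pole.
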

\begin{lemma}\label{max_lem}
If $S_u(x_0, 2t)\subset\subset V$, then 
\begin{equation}\max_{x\in\p S_u(x_0, t)} g_V(x, x_0)\leq Ct^{-\frac{n-2}{2}} + \max_{z\in \p S_u(x_0, 2t)} g_V(z, x_0).
\label{iter_max}
\end{equation}
\end{lemma}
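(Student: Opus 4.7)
The plan is to compare $g_V(\cdot,x_0)$ with the Green's function of $L_u$ on the smaller domain $S_u(x_0,2t)$ and invoke the estimate from Lemma \ref{smallV}. More precisely, let
$$h(x) := g_{S_u(x_0,2t)}(x, x_0)$$
be the Green's function of $L_u$ on $S_u(x_0, 2t)$ with pole at $x_0$. Since $u$ is smooth and $S_u(x_0, 2t)\subset\subset V\subset\subset\Omega$, the operator $L_u$ is uniformly elliptic on $\overline{S_u(x_0,2t)}$ with constants depending on $u$, so standard theory guarantees that $h$ is well defined, nonnegative, vanishes on $\p S_u(x_0,2t)$, and has exactly the same leading singularity at $x_0$ as $g_V(\cdot,x_0)$. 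Applying Lemma \ref{smallV} to $h$ (with the parameter $2t$ in place of $t$) yields
$$h(x) \leq C(n,\lambda,\Lambda)(2t)^{-\frac{n-2}{2}}\leq C t^{-\frac{n-2}{2}}\quad \text{for all } x\in \p S_u(x_0, t).$$

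Next, I would consider the difference $w(x) := g_V(x, x_0) - h(x)$ on $S_u(x_0,2t)$. Because the two Green's functions have identical delta-type singularities at $x_0$, $w$ extends to an $L_u$-harmonic function on all of $S_u(x_0,2t)$. On $\p S_u(x_0, 2t)$ we have $h\equiv 0$, so $w = g_V(\cdot,x_0)$ there. Since $L_u$ is uniformly elliptic on this compact set (again using smoothness of $u$), the classical maximum principle gives
$$w(x) \leq \max_{z\in\p S_u(x_0,2t)} g_V(z,x_0) \quad \text{for all } x\in S_u(x_0, 2t).$$

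Combining the two displayed inequalities, for $x\in\p S_u(x_0,t)$,
$$g_V(x, x_0) = h(x) + w(x) \leq C t^{-\frac{n-2}{2}} + \max_{z\in \p S_u(x_0,2t)} g_V(z,x_0),$$
which is the desired bound \eqref{iter_max}.

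The only real subtlety is verifying that $w$ is indeed $L_u$-harmonic across $x_0$ and that the maximum principle applies cleanly. Both follow from the fact that on any compactly contained section, $L_u$ is a bona fide uniformly elliptic operator in non-divergence form (with ellipticity constants depending on the smooth function $u$, though these dependencies play no role in the final estimate since the output is controlled only through Lemma \ref{smallV}, whose constants are universal). Everything else is a direct application of Lemma \ref{smallV} and the maximum principle; no new geometry of sections is needed in this step.
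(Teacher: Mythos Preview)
Your proof is correct and essentially identical to the paper's: both compare $g_V(\cdot,x_0)$ with $g_{S_u(x_0,2t)}(\cdot,x_0)$, note that their difference is $L_u$-harmonic on $S_u(x_0,2t)$, apply the maximum principle, and then invoke Lemma~\ref{smallV} to bound $g_{S_u(x_0,2t)}$ on $\p S_u(x_0,t)$. The paper's version additionally subtracts the constant $\inf_{\p S_u(x_0,2t)} g_V$ so that its auxiliary function is nonnegative on the boundary, but this constant cancels in the end and the argument is the same.
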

In the next lemma, by considering the Green's function on a larger domain containing $V$, we assume that $dist(x_0, \partial V)\geq dist(V,\partial\Omega)$ for the purpose of 
obtaining an upper bound for
$g_V(x_0, \cdot)$.
\begin{lemma}\label{small_lem} 
There exist constants $r(V,\Omega, n,\lambda,\Lambda)$ and $C(V,\Omega, n,\lambda,\Lambda)$ such that
\begin{equation}
S_u(x_0, 2r)\subset\subset V~\text{and}~ \max_{x\in\p S_u (x_0, r)} g_V(x, x_0)\leq C(V,\Omega, n,\lambda,\Lambda).
\label{smallr}
\end{equation}
\end{lemma}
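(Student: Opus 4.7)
The plan is to combine a universal $L^{1}$ bound on $g_{V}(\cdot,x_{0})$, obtained from a simple barrier, with the Caffarelli--Guti\'errez Harnack inequality (Theorem \ref{Holder_thm}) applied on small sections centered at points of $\partial S_u(x_0,r)$ that avoid $x_0$. The radius $r$ will be fixed depending only on $V,\Omega,n,\lambda,\Lambda$ so that every section used in the Harnack step sits compactly inside $V$.

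\emph{Step 1: An $L^{1}$ bound via barrier.} Set $v(x):=\int_V g_V(y,x)\,dy$. Using the symmetry of $g_V$ (itself a consequence of the symmetry of $(U^{ij})$) and the defining property of the Green's function, $v$ solves $L_u v=1$ in $V$ with $v=0$ on $\partial V$. The explicit function $w:=-u/(n\lambda)$ is nonnegative in $\overline\Omega$ (since $u\le 0$) and, using $U^{ij}u_{ij}=n\det D^{2}u$, satisfies $L_u w=\det D^{2}u/\lambda\ge 1$. Hence $L_u(w-v)\ge 0$ in $V$ and $w-v\ge 0$ on $\partial V$, so the weak maximum principle gives $v\le w$ in $V$, and
\[
\int_V g_V(y,x_0)\,dy \;=\; v(x_0) \;\le\; \|u\|_{L^\infty(\Omega)}/(n\lambda) \;\le\; C(\Omega,n,\lambda,\Lambda),
\]
the last inequality being Aleksandrov's estimate under (\ref{MAeq})--(\ref{fbound}).

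\emph{Step 2: Harnack plus volume.} Fix $r=r(V,\Omega,n,\lambda,\Lambda)>0$ small enough that $S_u(x_0,2\theta r)\subset\subset V$, where $\theta>1$ is the section-engulfing constant (depending only on $n,\lambda,\Lambda$); such an $r$ exists because $dist(x_0,\partial V)\ge dist(V,\partial\Omega)$ and sections shrink to $\{x_0\}$ in view of Lemma \ref{vol_lem}. For each $y\in\partial S_u(x_0,r)$, I choose $s=c_1 r$ with $c_1=c_1(n,\lambda,\Lambda)>0$ small enough that (i) $S_u(y,\tau s)\subset S_u(x_0,2\theta r)\subset V$ by engulfing, and (ii) $x_0\notin S_u(y,\tau s)$ by the quasi-symmetry of the Monge--Amp\`ere quasi-distance, where $\tau$ is the Harnack constant from Theorem \ref{Holder_thm}. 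Then $g_V(\cdot,x_0)$ is a positive $L_u$-solution on $S_u(y,\tau s)$, so Theorem \ref{Holder_thm}, together with $\inf\le$ average, Step 1, and Lemma \ref{vol_lem}, gives
\begin{align*}
g_V(y,x_0) &\;\le\; \sup_{S_u(y,\tau s)} g_V(\cdot,x_0) \;\le\; C \inf_{S_u(y,\tau s)} g_V(\cdot,x_0) \\
&\;\le\; \frac{C}{|S_u(y,\tau s)|}\int_V g_V(z,x_0)\,dz \;\le\; \frac{C}{r^{n/2}}.
\end{align*}
Since $r$ is fixed, the right-hand side is $C(V,\Omega,n,\lambda,\Lambda)$, which proves (\ref{smallr}).

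\emph{Main obstacle.} The technical crux is the geometric setup in Step 2: at each $y\in\partial S_u(x_0,r)$ one must produce a section $S_u(y,s)$ of size $s\sim r$ that is simultaneously compactly contained in $V$ and avoids $x_0$, with all constants universal. This rests on the engulfing and quasi-triangle properties of sections under (\ref{fbound}); without these affine-invariant structural estimates the argument would break.
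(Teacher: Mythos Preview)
Your argument is correct and follows essentially the same route as the paper: an $L^{1}$ bound for $g_V(\cdot,x_0)$ obtained by testing against $u$ (your barrier comparison with $w=-u/(n\lambda)$ is just the maximum-principle reformulation of the paper's integration-by-parts identity $\int_V nf\,g_V=\int_{\partial V}\rho u\,dS-u(x_0)$), followed by the Caffarelli--Guti\'errez Harnack inequality on sections avoiding $x_0$ together with the volume lower bound. One minor remark: the existence of a universal $r$ with $S_u(x_0,2r)\subset\subset V$ does not follow from Lemma~\ref{vol_lem} alone (volume smallness does not control diameter); the paper invokes the interior $C^{1,\alpha}$ estimate for $u$, which gives $S_u(x_0,h)\subset B(x_0,Ch^{\alpha})$.
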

\begin{proof}[Proof of Theorem \ref{Gthm}]
{\it Part (i).}
We will prove the lower and upper bound for $g_V$.\\
{\bf Lower bound for $g_V$.} Consider the following cases.\\
{\bf Case 1:} $n\geq 3$ and $S_u(x_0, 2t)\subset\subset V$. In this case, the difference $w:= g_V(x, x_0)-g_{S_u(x_0, 2t)}(x, x_0)$ solves
$$U^{ij} w_{ij}=0~\text{in}~ S_{u}(x_0, 2t),
\text{ with }
w>0~\text{on}~\p S_u(x_0, 2t).$$
Thus, by the maximum principle, $w(x)\geq 0$ for $x\in S_u(x_0, t)$. It follows from Lemma \ref{smallV} that
$$g_V(x, x_0)\geq g_{S_u(x_0, 2t)}(x, x_0)\geq c(n,\lambda, \Lambda)t^{-\frac{n-2}{2}}~\forall x\in S_u(x_0, t).$$
{\bf Case 2:} $n=2$ and $S_u(x_0, t^{1/2})\subset\subset V$. Suppose that $S_u(x_0, 2h)\subset\subset V$. Then, the function
$$w(x) = g_V(x, x_0)-\inf_{y\in \p S_u(x_0, 2h)} g_V(y, x_0)- g_{S_u(x_0, 2h)}(x, x_0)$$
satisfies
$$L_u w=0~\text{in}~ S_u(x_0, 2h)~\text{with}~w\geq 0~\text{on}~\p S_u(x_0, 2h).$$
By the maximum principle, we have $w\geq 0$ in $ S_u(x_0, 2h)$. Thus, by Lemma \ref{smallV}, we find that 
\begin{equation}g_V(x, x_0)-\inf_{y\in \p S_u(x_0, 2h)} g_V(y, x_0)\geq g_{S_u(x_0, 2h)}(x, x_0)\geq c~\forall ~x\in S_u(x_0, h).
\label{lower2Dh}
\end{equation}
Choose an integer $k\geq 1$ such that
$2^k \leq t^{-1/2}< 2^{k+1}.$
Then
$$|\text{log t}|\leq Ck~\text{and}~ 2^k t\leq t^{1/2}.$$
Applying (\ref{lower2Dh}) to $h=t, 2t, \cdots, 2^{k-1} t$, we get
\begin{eqnarray*}\inf_{y\in \p S_u(x_0, t)} g_V(y, x_0) \geq \inf_{y\in \p S_u(x_0, 2t)} g_V(y, x_0) + c \geq \cdots &\geq& \inf_{y\in \p S_u(x_0, 2^kt)} g_V(y, x_0) + kc\\ &\geq& kc \geq 
c |\text{log t}|.
\end{eqnarray*}
{\bf Upper bound for $g_V$.} Our proof of the upper bound for $g_V$ just follows from iterating the estimate in Lemma \ref{max_lem} and the upper bound for $g_V$ near $\p V$ in 
Lemma \ref{small_lem}.\\
{\it Part (iii).} Recall that in this part $n=2$.
 Let $v(x)= g_V(x, x_0)$ and $S=S_u(x_0, r)$. Then the upper bound for $v$ in Theorem \ref{Gthm}(i) implies that $v\in L^q(S)$ for all $q<\infty$ with the bound
 \begin{equation}\|v\|_{L^q(S)}\leq C(V,\Omega, \lambda,\Lambda, q, r).
  \label{Glq}
 \end{equation}

By \cite[Theorem 6.2]{MCV}, we have
\begin{equation}\int_{S} U^{ij} v_{i}(x) v_{j}(x) \frac{1}{v(x)^2} dx \leq C(n,\lambda,\lambda) \frac{\mu(S)}{r}\leq  C(n,\lambda,\lambda)
 \label{log_ineq}
\end{equation}
where we used the upper bound on volume of section in Lemma \ref{vol_lem} in the last inequality. Next, we use the following inequality
$$U^{ij} v_{i}(x) v_{j}(x)  \geq \frac{\det D^2 u |\nabla v|^2}{\Delta u}$$
whose simple proof can be found in \cite[Lemma 2.1]{CG}.
Thus, for all integrable function $f$ we have
\begin{eqnarray*}|\nabla v|^2 |f|^2 = (\Delta u |f|^2) \frac{|\nabla v|^2}{\Delta u}&\leq& \frac{1}{\lambda }(\Delta u |f|^2) \frac{\det D^2u|\nabla v|^2}{\Delta u}\\ & \leq& \frac{1}{\lambda} 
 (\Delta u |f|^2 v^2) U^{ij} v_{i}(x) v_{j}(x) \frac{1}{v(x)^2}.
 \end{eqnarray*}
Integrating over $S$ and using Cauchy-Schwartz inequality and (\ref{log_ineq}), one finds
\begin{eqnarray}
 \int_{S} |\nabla v||f|&\leq& \frac{1}{\sqrt{\lambda}}\left(\int_S U^{ij} v_{i}(x) v_{j}(x) \frac{1}{v(x)^2} \right)^{1/2}\left(\int_S \Delta u |f|^2 v^2\right)^{1/2} 
 \nonumber\\&\leq& C(n,\lambda,\Lambda)\left(\int_S \Delta u |f|^2 v^2\right)^{1/2}.
 \label{dgr}
\end{eqnarray}
By the  De Philippis-Figalli-Savin and Schmidt's $W^{2, 1+\e}$ estimates for the Monge-Amp\`ere equation \cite{DPFS, Sch}, there exists $\e = \e(n,\lambda,\Lambda)>0$ such that
$D^2 u\in L^{1+\e}_{loc}(\Omega)$. Thus, by H\"older inequality, 
\begin{eqnarray}
 \int_S \Delta u |f|^2 v^2 &\leq& \left(\int_S (\Delta u)^{1+\e} \right)^{\frac{1}{1+\e}} \left(\int_S  |f|^{\frac{2(1+\e)}{\e}} v^{\frac{2(1+\e)}{\e}}\right)^{\frac{\e}{1+\e}}
 \nonumber \\ &\leq & C(n,\lambda,\Lambda, r) \left(\int_S  |f|^{\frac{2(1+\e)}{\e}} v^{\frac{2(1+\e)}{\e}}\right)^{\frac{\e}{1+\e}}.
 \label{higher-int}
\end{eqnarray}
From (\ref{Glq}), we find that $\left(\int_S  |f|^{\frac{2(1+\e)}{\e}} v^{\frac{2(1+\e)}{\e}}\right)^{\frac{\e}{1+\e}}$ is finite if
$f\in L^{\frac{p}{p-1}}(S)$ where $\frac{p}{p-1}> \frac{2(1+\e)}{\e}$, or
$$p<1 + \frac{\e}{2+ \e}:= p_{\ast}.$$
Combining (\ref{dgr}) with (\ref{higher-int}) and (\ref{Glq}), one finds that
$$\int_{S} |\nabla v||f| \leq C(V,\Omega, n,p,\lambda,\Lambda, r) \|f\|_{L^{\frac{p}{p-1}}(S)}$$
for all $f\in L^{\frac{p}{p-1}}(S)$ where
$1<p<p_{\ast}.$ Theorem \ref{Gthm} (iii) then follows from duality.
\end{proof}

\begin{proof}[Proof of Corollary \ref{RemoveSing}]
Let $\tilde v$ solves 
\begin{equation*}
 \left\{
 \begin{alignedat}{2}
   U^{ij}\tilde v_{ij}~&=0\h~&&\text{in} ~S_u(0, R), \\\
\tilde v &=v\h~&&\text{on}~\p S_u(0, R).
 \end{alignedat}
 \right.
\end{equation*}
We will prove that $v=\tilde v$ in $S_u(0, R)\backslash \{0\}$. We only consider the case $n\geq 3$. The case $n=2$ is similar. Let $w=\tilde v- v$ in $S_u(0, R)\backslash \{0\}$ and $M_r=\max_{\p S_u(0, r)} |w|.$ Let $\sigma(x)= g_{S_u(0, R)}(x, 0)$. By the lower bound for the Green's function in Theorem \ref{Gthm}, it is obvious that
$$|w(x)|\leq CM_r r^{\frac{n-2}{2}} \sigma(x)~\text{on}~ S_u(0, r).$$
Note that
$$U^{ij}(w-CM_r r^{\frac{n-2}{2}} \sigma (x) )_{ij}=0~\text{in}~ S_u(0, R)\backslash S_u(0, r).$$
Thus, by the maximum principle in $S_u(0, R)\backslash S_u(0, r)$, we have
$$|w(x)|\leq CM_r r^{\frac{n-2}{2}} \sigma (x)~\text{in}~S_u(0, R)\backslash S_u(0, r).$$
Observe that
$$M_r= \max_{\p S_u(0, r)} |v-\tilde v|\leq M + \max_{\p S_u(0, r)} |v|$$
where $M=\max_{\p S_u(0, R)} |\tilde v|$. For each fixed $x\neq 0$, we can choose $r$ small so that $x\not\in S_u(0, r)$ and hence, by our hypothesis on the asymptotic behavior of $v$ near $0$,
$$|w(x)|\leq CM r^{\frac{n-2}{2}} \sigma(x) + C \sigma(x) r^{\frac{n-2}{2}}\max_{\p S_u(0, r)} |v|\rightarrow 0~\text{as}~r\rightarrow 0. $$
This proves $v=\tilde v$ in $S_u(0, R)\backslash \{0\}$.
\end{proof}
\begin{proof}[Proof of Lemma \ref{smallV}]
By subtracting a linear function, we can assume that
$u\geq 0~\text{and}~ u(x_0)=0.$
For simplicity, let us  denote $\sigma(x) =g_V(x, x_0)$. Then on $V= S_u(x_0, t)$, $\sigma$ satisfies
\begin{equation}
 \left\{
 \begin{alignedat}{2}
  L_u\sigma~&=\delta_{x_0} \h~&&\text{in} ~V, \\\
\sigma &=0\h~&&\text{on}~\p V.
 \end{alignedat}
\right.
\label{DMA}
\end{equation}
Multiplying both sides of (\ref{DMA}) by $u(x)-t$ and 
integrating by parts twice, we get
$$-t = u(x_0)-t=\int_{V} (L_u \sigma) (u-t) =\int_{V}-U^{ij}\sigma_{ij} (u-t)=\int_{V}-U^{ij}\sigma u_{ij}=\int_{V}-n f \sigma.$$
The bounds on $f$ then give the following bounds for the integral of $\sigma$:
$$\frac{t}{n\lambda}\geq \int_V \sigma \geq \frac{t}{n\Lambda}.$$
On the other hand, by the ABP estimate, for any $\varphi\in L^n(V)$, the solution $\psi$ to 
$$-U^{ij}\psi_{ij}=\varphi~\text{in}~V,~\psi=0~\text{on}~\p V,$$
satisfies
$$|\int_V \sigma(x) \varphi(x) dx|=|\psi(x_0)|\leq C(n)|V|^{1/n}\left\|\frac{\varphi}{\det U}\right\|_{L^n(V)} \leq C(n,\lambda, \Lambda) |V|^{1/n} \|\varphi\|_{L^n(V)}.$$
Here we used the identity $\det U= (\det D^2 u)^{n-1}$. By duality, we obtain
$$\left(\int_V \sigma^{\frac{n}{n-1}} \right)^{\frac{n-1}{n}}\leq C(n,\lambda,\Lambda) |V|^{1/n}.$$
This is essentially  inequality (2.3) in \cite{MRL}.
Hence, by Lemma \ref{vol_lem},
$$\|\sigma\|_{L^{\frac{n}{n-1}}(S_u (x_0, t))}\leq C(n,\lambda,\Lambda) t^{1/2}.$$
Let $$K= (S_u(x_0, t)\backslash S_u (x_0, r_2 t)) \cup S_u (x_0, r_1 t)$$ where $0<r_1<1/2<r_2<1$. Then, by \cite[Lemma 6. 5. 1]{G} and Lemma \ref{vol_lem}, we can estimate
$$|K|\leq n(1-r_2)|S_u (x_0, t)| + |S_u(x_0, r_1 t)|\leq C_1 n(1-r_2)t^{n/2} + C_1(r_1 t)^{n/2}\leq \e^n t^{n/2}$$ 
for $$\e=\min\{ \frac{1}{2C_1 (n,\lambda,\Lambda)n\Lambda}, \left(\frac{1}{2c_1}\right)^{1/n}\}$$
if
$r_1, 1-r_2$ are universally small. Then by Lemma \ref{vol_lem},
\begin{equation}
\frac{c_1}{2} t^{n/2}\leq |S_u(x_0, t)\backslash K|\leq C_1 t^{n/2}.
\label{notK}
\end{equation}
On the other hand, by Holder inequality, we have
$$\int_K \sigma \leq \|\sigma\|_{L^{\frac{n}{n-1}}(K)}|K|^{1/n}\leq C(n,\lambda,\Lambda)t^{1/2} \e t^{1/2}= \frac{t}{2n\Lambda}.$$ It follows that
\begin{equation}\frac{t}{n\lambda}\geq\int_{S_u(x_0, t)\backslash K}\sigma\geq \frac{t}{2n\Lambda}.
\label{int_sig}
\end{equation} 
Given $0<r_1<r_2<1$ as above, 
we have
\begin{equation}\sup_{S_u(x_0, t)\backslash K} \sigma \leq C(n,\lambda,\Lambda) \inf_{S_u(x_0, t)\backslash K} \sigma.
\label{CGK}
\end{equation}
Combining (\ref{notK})-(\ref{CGK}), we find that
$$C^{-1}(n,\lambda,\Lambda)t^{-\frac{n-2}{2}}\leq\sigma(x) \leq C(n,\lambda,\Lambda)t^{-\frac{n-2}{2}}~\forall x\in S_u(x_0, t)\backslash K.$$
This line of argument is very similar to the proof of Lemma 5.1 in \cite{LS}. Since $r_2>1/2>r_1$, we obtain the desired upper bound for $\sigma(x)= g_V(x, x_0)$ when $x\in\p S_u(x_0, t/2)$ while from the maximum principle, we obtain the desired lower bound for $\sigma(x)= g_V(x, x_0)$ when $x\in S_u(x_0, t/2).$

For completeness, we include the details of (\ref{CGK}). By \cite[Theorem 3.3.10]{G}, we can find a universal $\alpha\in (0, 1)$ such that for each $x\in S_u(x_0, t)\backslash K$, the section $S_u(x, \alpha t)$ satisfies
$$x_0\not\in S_u(x,\alpha t)~\text{and}~ S_u(x, \alpha t)\subset S_u (x_0, t).$$ Using Lemma \ref{cov_lem}, we can find a collection of sections $S_u(x_i, \tau\alpha t)$ with $x_i\in S_u(x_0, t)\backslash K$ such that
$$S_u(x_0, t)\backslash K \subset \bigcup_{i\in I} S_u(x_i,\tau\alpha t)$$
and
$S_u(x_i,\delta\tau\alpha t)$ are disjoint for some universal $\delta\in (0, 1).$ By using the volume estimates in Lemma \ref{vol_lem}, we find that $|I|$ is universally bounded. Now, we apply Theorem \ref{Holder_thm} to each $S_u (x_i, \alpha t)$ to obtain (\ref{CGK}).
\end{proof}
\begin{proof}[Proof of Lemma \ref{max_lem}]
To prove (\ref{iter_max}), we consider
$$w(x) = g_V(x, x_0)-\inf_{y\in \p S_u(x_0, 2t)} g_V(y, x_0)- g_{S_u(x_0, 2t)}(x, x_0).$$
It satisfies
$$L_u w=0~\text{in}~ S_u(x_0, 2t)~\text{with}~w\geq 0~\text{on}~\p S_u(x_0, 2t).$$
In $\overline{S_u(x_0, 2t)}$, $w$ attains its maximum value on the boundary $\p S_u(x_0, 2t)$. Thus, for $x\in \p S_u(x_0, t)$, we have
\begin{multline*}g_V(x, x_0)-\inf_{y\in \p S_u(x_0, 2t)} g_V(y, x_0)- g_{S_u(x_0, 2t)}(x, x_0)\leq \max_{z\in \p S_u(x_0, 2t)} w\\= \max_{z\in \p S_u(x_0, 2t)} g_V(z, x_0)-\inf_{y\in \p S_u(x_0, 2t)} g_V(y, x_0)
\end{multline*}
since $g_{S_u(x_0, 2t)}(x, x_0)=0$ on $\p S_u(x_0, 2t)$. This together with Lemma \ref{smallV} gives
\begin{eqnarray*}\max_{x\in \p S_u(x_0, t)} g_V(x, x_0)&\leq& \max_{z\in \p S_u(x_0, t)} g_{S_u(x_0, 2t)}(z, x_0) + \max_{z\in \p S_u(x_0, 2t)} g_V(z, x_0)\\ &\leq & Ct^{-\frac{n-2}{2}} + \max_{z\in \p S_u(x_0, 2t)} g_V(z, x_0).
\end{eqnarray*}
Therefore, (\ref{iter_max}) is proved.
\end{proof}
\begin{proof}[Proof of Lemma \ref{small_lem}]
The existence of $r(V,\Omega, n,\lambda,\Lambda)$ is easy to prove by the $C^{1,\alpha}$ estimate for $u$ which implies in particular that
$S_u(x_0, h)\subset B(x_0, Ch^{\alpha}).$ We now prove
$$ \max_{\p S_u (x_0, r)} g_V(x, x_0)\leq C(V,\Omega, n,\lambda,\Lambda).$$
To do this, we first multiply $\sigma(x):= g_V(x, x_0)$ to $L_u \Phi$ for various choices of $\Phi=\Phi(x, u(x), Du(x))$ and then integrate by parts. 
Let $\nu$ be the unit outer-normal vector field on $\p V$. Note that, on $\p V$, we have
$\nu =-\frac{D\sigma}{|D\sigma|}.$
Integrating by parts, we get
\begin{eqnarray}
\int_{V}(L_u \Phi)\sigma =\int_{V} -U^{ij}\sigma \Phi_{ij} &=&\int_{V}(U^{ij}\sigma)_i \Phi_j -\int_{\p V} U^{ij}\sigma \Phi_j \nu_i =\int_{V}-(U^{ij}\sigma)_{ij}\Phi +\int_{\p V} U^{ij}\sigma_i\Phi\nu_j\nonumber\\&=& \Phi(x_0, u(x_0), Du(x_0)) -\int_{V}U^{ij}\Phi \sigma_i \frac{\sigma_j}{|D\sigma|}\nonumber\\
&=&\Phi(x_0, u(x_0), Du(x_0))-\int_{V}\Phi \rho dS.
\label{adj_iden}
\end{eqnarray}
Here, we denote
$$\rho=U^{ij} \sigma_i \frac{\sigma_j}{|D\sigma|}.$$
First, we choose $\Phi\equiv 1$. Then (\ref{adj_iden}) gives
\begin{equation}\int_{\p V}\rho dS=1.
\label{rho1}
\end{equation}
Next, we choose $\Phi\equiv u.$ Then, since $$L_u u= -U^{ij} u_{ij}=-n\det D^2 u=-nf,$$ (\ref{adj_iden}) gives
$$\int_V nf g_V(x, x_0) dx = \int_{\p V} \rho u dS - u(x_0).$$
By Aleksandrov's maximum principle \cite[Theorem 1.4.2]{G}, we have
$$|u(x_0)|, \max_{x\in\p V}|u(x)|\leq C(V, \Omega, n,\lambda, \Lambda).$$ 
Combining these with (\ref{rho1}), we get
$$\int_{S_u(x_0, 2r)} g_V (x, x_0) dx\leq C(V, \Omega, n,\lambda, \Lambda).$$
Using the lower bound for volume of sections in Lemma \ref{vol_lem} and Caffarelli-Guti\'errez's Harnack inequality in Theorem \ref{Holder_thm}, we get the second inequality in (\ref{smallr}).
\end{proof}

If we choose $\Phi\equiv |x|^2$ in  (\ref{adj_iden}) then, 
since
$L_u \Phi =-2U^{ij}\delta_{ij} =-2 \text{trace}~ U,$
we get from (\ref{adj_iden}) that
$$\int_{V}-2 \text{trace}~ U\sigma=\int_{V}(L_u \Phi)\sigma= |x_0|^2-\int_V |x|^2\rho dS.$$
Thus, by (\ref{rho1}),
$$2\int_V \text{trace}~ U\sigma = \int_{\p V}|x|^2\rho dS -|x_0|^2\leq \max_{x\in\p V} |x|^2- |x_0|^2.$$
This combined with the lower bound of $\sigma$ in Theorem \ref{Gthm} gives the following Corollary.

\begin{cor}  Assume that $V\subset\subset \Omega$ and
$\lambda\leq \det D^2 u\leq\Lambda~\text{in}~\Omega.$ If $S_u(x_0, 2t)\subset\subset V$ or $S_u(x_0, t^{1/2})\subset\subset V$ when $n=2$ then, we have
\begin{equation}\int_{S_u(x_0, t)} \text{trace}~ U \leq  \begin{cases} C(n,\lambda,\Lambda) t^{\frac{n-2}{2}}\left( \max_{x\in\p V} |x|^2-|x_0|^2\right) &\mbox{if } n \geq 3 \\
C(\lambda,\Lambda) |\text{log}~t|^{-1}\left( \max_{x\in\p V} |x|^2-|x_0|^2\right) & \mbox{if } n = 2 \end{cases}.
\label{A2ineq}
\end{equation}
\label{A2lem}
\end{cor}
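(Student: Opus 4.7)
The statement is essentially a direct consequence of the integral identity that is derived in the paragraph immediately preceding the corollary (via the choice $\Phi \equiv |x|^2$ in the adjoint identity \eqref{adj_iden}), namely
$$2\int_V \text{trace}\, U \cdot \sigma \, dx \leq \max_{x\in\p V} |x|^2- |x_0|^2,$$
where $\sigma(x) = g_V(x, x_0)$. Since $D^2u$ is positive definite with $\det D^2u \geq \lambda > 0$, its cofactor matrix $U$ is also positive definite, so $\text{trace}\, U > 0$ pointwise. Combined with $\sigma \geq 0$ on $V$, this means we may restrict the domain of integration on the left-hand side from $V$ to any subdomain without increasing the value.

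The plan, then, is to pick the subdomain to be $S_u(x_0, t)$ and invoke the sharp pointwise lower bound on $\sigma$ provided by Theorem \ref{Gthm}(i). Specifically, under the hypothesis $S_u(x_0, 2t) \subset\subset V$ (for $n \geq 3$) or $S_u(x_0, t^{1/2}) \subset\subset V$ (for $n = 2$), that theorem yields
$$\sigma(x) \geq \begin{cases} c(n,\lambda,\Lambda)\, t^{-(n-2)/2} & \text{for } x \in S_u(x_0, t),\ n\geq 3, \\ c(\lambda,\Lambda)\, |\text{log}\, t| & \text{for } x \in S_u(x_0, t),\ n=2. \end{cases}$$
Inserting this into $2\int_{S_u(x_0,t)} \text{trace}\, U \cdot \sigma \, dx \leq \max_{x\in\p V} |x|^2- |x_0|^2$ and dividing by the corresponding constant times $t^{-(n-2)/2}$ (respectively $|\text{log}\, t|$) immediately yields the two bounds claimed in \eqref{A2ineq}.

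I do not anticipate any genuine obstacle here; the corollary is a clean consequence once one recognizes that (a) the identity produced by $\Phi=|x|^2$ controls the weighted integral $\int_V \text{trace}\, U \cdot \sigma$, and (b) the lower bound on the Green's function is precisely what is needed to convert that weighted integral into a bound on $\int_{S_u(x_0,t)} \text{trace}\, U$. The only small point to mention explicitly in the write-up is the positivity of $\text{trace}\, U$, which justifies restricting the integration to the section.
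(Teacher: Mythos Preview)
Your proposal is correct and follows exactly the paper's own argument: the paper derives the inequality $2\int_V \text{trace}\, U \cdot \sigma \leq \max_{x\in\p V} |x|^2 - |x_0|^2$ from the choice $\Phi=|x|^2$ in \eqref{adj_iden}, and then states that combining this with the lower bound on $\sigma$ from Theorem \ref{Gthm} yields the corollary. Your only addition is to make explicit the positivity of $\text{trace}\, U$ (and of $\sigma$) that justifies shrinking the domain of integration to $S_u(x_0,t)$, which is implicit in the paper's one-line deduction.
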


We end this section with the proof of Remark \ref{TiWrm}.
\begin{proof}[Proof of Remark \ref{TiWrm}]
{\bf 1}. In dimensions $n\geq 3$, we can establish the upper bound for $g_V$ by using Proposition \ref{TiWlem}. When $u$ satisfies (\ref{MAeq}) and (\ref{fbound}), this proposition 
says that
$$|\{x\in V: g_V(x, x_0)>T\}|< K(V,\Omega, n,\lambda,\Lambda)T^{-\frac{n}{n-2}}.$$
We show that for small $t$ and $x\in\p S_u(x_0, t)$
$$g_V(x, x_0)\leq (\frac{K}{c_1})^{\frac{n-2}{n}} t^{-\frac{n-2}{2}}$$
where $c_1$ is the constant in Lemma \ref{vol_lem}. Indeed, assume that for some $t>0$, we have
$$T=\max_{x\in \p S_u(x_0, t)}g_V(x, x_0)>(\frac{K}{c_1})^{\frac{n-2}{n}} t^{-\frac{n-2}{2}}.$$
Then, by the maximum principle,
$$S_u(x_0, t)\subset \{x\in V: g_V(x, x_0)>T\}.$$
It follows from the lower bound on the volume of sections in Lemma \ref{vol_lem} that
\begin{eqnarray*}c_1 t^{\frac{n}{2}}\leq |S_u(x_0, t)|&\leq& |\{x\in V: g_V(x, x_0)>T\}|\leq K(V, n,\lambda,\Lambda)T^{-\frac{n}{n-2}}\\ &<& K \left((\frac{K}{c_1})^{\frac{n-2}{n}} t^{-\frac{n-2}{2}}\right)^{-\frac{n}{n-2}}= c_1 t^{\frac{n}{2}}.
\end{eqnarray*}
This is a contradiction. Thus, we must have the desired upper bound.\\
{\bf 2.} The proof using Proposition \ref{Mthm} is similar to the above case and is thus omitted.
\end{proof}
\section{Capacity and lower bound for the Green's function}
\label{sec_n2}
In this section, we bound the Green's function using capacity in potential theory and give the proof for the lower bound of the Green's function in Theorem \ref{Gthm} (ii). 

Let $u$ be convex with compact sections and satisfies the Monge-Amp\`ere equation (\ref{MAeq}) with (\ref{muDC}).
Let $V$ be a fixed, open, bounded set in $\RR^n$ and let $K$ be a closed subset of $V$. 
We define the capacity of $K$ with respect to the linearized Monge-Amp\`ere operator $L_u:=-U^{ij}\p_{ij}$ and the set $V$ as the infimum of

$$Q_u (\Phi)=\int_{V} U^{ij} \Phi_i \Phi_j$$
among functions $\Phi\in H^{1}_{0}(V)$ satisfying $\Phi\geq 1$ on $K$. This infimum will be denoted by
$cap_{L_u}(K, V)$. 
In what follows, our arguments do not depend on the lower and upper bounds of the eigenvalues of the matrix $(U^{ij})$. Thus, when necessary, we can assume that $L_u$ is uniformly elliptic. In particular, we obtain as in \cite{LSW} the following theorem:
\begin{theorem} \label{cap_thm} Suppose that $S_u(x_0, 2t)\subset\subset V.$ Let $g_V$ be the Green's function for $L_u$ in $V$. Then there is a constant $C(n,\alpha, \beta)$ such that for all $x\in\p S_u(x_0, t)$
$$C^{-1}\left[cap_{L_u}(\overline{S_u(x_0, t)}, V)\right]^{-1}\leq g_V(x, x_0)\leq C \left[cap_{L_u}(\overline{S_u(x_0, t)}, V)\right]^{-1}.$$
\end{theorem}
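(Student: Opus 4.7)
The plan is to mimic the potential-theoretic argument of Littman-Stampacchia-Weinberger, with Maldonado's Harnack inequality (Theorem \ref{MHolder_thm}) substituting for the De Giorgi-Nash-Moser estimate that drives the uniformly elliptic case. The central object is the \emph{capacitary potential} $w\in H^{1}_{0}(V)$, the unique minimizer of $Q_{u}$ subject to $\Phi\geq 1$ on $\overline{S_{u}(x_{0},t)}$. Obstacle-problem theory (equivalently, solving the Dirichlet problem for $L_{u}$ in $V\setminus\overline{S_{u}(x_{0},t)}$ with boundary values $1$ on $\partial S_{u}(x_{0},t)$ and $0$ on $\partial V$, then extending by $1$ inside) shows that $w\equiv 1$ on $\overline{S_{u}(x_{0},t)}$ and $L_{u}w=0$ distributionally outside. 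Hence $\mu_{w}:=L_{u}w$ is a non-negative Radon measure concentrated on $\partial S_{u}(x_{0},t)$; testing against $w$ itself and integrating by parts (valid since $U$ is divergence-free and $w\in H^{1}_{0}$) gives
$$\mu_{w}(\partial S_{u}(x_{0},t))\;=\;\int_{V}U^{ij}w_{i}w_{j}\;=\;cap_{L_{u}}(\overline{S_{u}(x_{0},t)},V).$$

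Next I would use the Green's function representation. Because $L_{u}=-\p_{j}(U^{ij}\p_{i}\cdot)$ is self-adjoint (the cofactor matrix being symmetric), the Green's function is symmetric: $g_{V}(x,y)=g_{V}(y,x)$. Writing $w$ as the $L_{u}$-potential of its source measure,
$$w(x)\;=\;\int_{\partial S_{u}(x_{0},t)}g_{V}(x,y)\,d\mu_{w}(y),$$
and evaluating at $x_{0}\in S_{u}(x_{0},t)$ (where $w\equiv 1$) yields the key identity
$$1\;=\;w(x_{0})\;=\;\int_{\partial S_{u}(x_{0},t)}g_{V}(y,x_{0})\,d\mu_{w}(y).$$
Setting $M:=\max_{\partial S_{u}(x_{0},t)}g_{V}(\cdot,x_{0})$ and $m:=\min_{\partial S_{u}(x_{0},t)}g_{V}(\cdot,x_{0})$, and remembering that the total mass of $\mu_w$ equals the capacity, this identity gives at once
$$m\cdot cap_{L_{u}}(\overline{S_{u}(x_{0},t)},V)\;\leq\;1\;\leq\;M\cdot cap_{L_{u}}(\overline{S_{u}(x_{0},t)},V).$$

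To close the two-sided comparison I need to show $M\leq C(n,\alpha,\beta)\,m$. The function $y\mapsto g_{V}(y,x_{0})$ is non-negative and $L_{u}$-harmonic on $V\setminus\{x_{0}\}$, but the natural enclosing section $S_{u}(x_{0},2t)$ contains the singularity at $x_{0}$, so Theorem \ref{MHolder_thm} cannot be applied globally inside it. Instead I would cover $\partial S_{u}(x_{0},t)$ by a finite family of sections of Maldonado Harnack scale lying in the annular region $S_{u}(x_{0},2t)\setminus\overline{S_{u}(x_{0},t/2)}$, chain them via the engulfing property of sections, and apply Theorem \ref{MHolder_thm} along the chain to conclude $M\leq Cm$. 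Combined with the previous display this traps both $m$ and $M$ (hence $g_{V}(x,x_{0})$ for every $x\in\partial S_{u}(x_{0},t)$) between universal multiples of $[cap_{L_{u}}(\overline{S_{u}(x_{0},t)},V)]^{-1}$, proving the theorem.

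The main technical obstacle is precisely this boundary Harnack chain in the annulus $S_{u}(x_{0},2t)\setminus\overline{S_{u}(x_{0},t/2)}$: one must produce a chain of Harnack-scale sections covering $\partial S_{u}(x_{0},t)$ whose length is bounded by a constant depending only on $n,\alpha,\beta$. This relies on quantitative geometric properties of sections of $u$ under the doubling hypothesis (\ref{muDC})---the engulfing property, the doubling of $\mu$ on sections (Lemma \ref{muDP}), and a Vitali-type covering in the spirit of Lemma \ref{cov_lem}. Once these geometric inputs are in place, the remaining steps are direct transcriptions of the classical LSW argument.
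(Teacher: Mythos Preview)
Your proposal is correct and follows essentially the same LSW approach as the paper—capacitary potential, Green-function representation, and Maldonado's Harnack inequality (Theorem~\ref{MHolder_thm}) to compare $M$ and $m$. The only cosmetic difference is that the paper derives the sandwich $m\leq[cap_{L_u}]^{-1}\leq M$ via the level sets $J_{a}=\{g_{V}(\cdot,x_{0})\geq a\}$ and the exact identity $cap_{L_{u}}(J_{a},V)=1/a$, rather than by evaluating the capacitary potential of $\overline{S_{u}(x_{0},t)}$ at $x_{0}$ as you do; the paper also glosses over the annular Harnack-chain issue you correctly flag.
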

\begin{proof}[Proof of the lower bound of the Green's function in Theorem \ref{Gthm}(ii)]
In view of Theorem \ref{cap_thm} and the maximum principle, the lower bound for the Green's function in Theorem \ref{Gthm}(ii) follows from the following capacity estimates:
$$cap_{L_u}(\overline{S_u(x_0, t)}, V)\leq \begin{cases} C(n,\alpha, \beta) \mu(S_u(x_0, t))t^{-1} &\mbox{if } n \geq 3 \\ \displaystyle
\frac{8}{ |\text{log t}|^2} \int_{t}^{t^{1/2}} \frac{\mu(S_u(x_0, s)) ds}{s^2}& \mbox{if } n = 2. \end{cases}$$
We will prove these estimates in Lemmas \ref{cap_3D_lem} and \ref{cap_2D_lem} below.
\end{proof}
\begin{lemma} \label{cap_3D_lem} Assume $n\geq 3$. Suppose that $S_u(x_0, 2t)\subset\subset V$. Then
$$cap_{L_u}(\overline{S_u(x_0, t)}, V)\leq C(n,\alpha, \beta) \mu(S_u(x_0, t))t^{-1}.$$
\end{lemma}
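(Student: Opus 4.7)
The plan is to test the variational definition of capacity against an explicit cutoff built from $u$ itself. Writing $\ell(x) = u(x_0) + \nabla u(x_0)\cdot(x - x_0)$ for the supporting plane at $x_0$ and $w = u - \ell \ge 0$, one has $\{w < s\} = S_u(x_0, s)$ for every $s > 0$, so the natural choice is
$$\Phi(x) = \min\!\left(1,\;\frac{(2t - w(x))_{+}}{t}\right),$$
which equals $1$ on $\overline{S_u(x_0, t)}$, is supported in $\overline{S_u(x_0, 2t)} \subset\subset V$, and is therefore an admissible competitor in $H^1_0(V)$. Since $\nabla\Phi = -t^{-1}\nabla w$ on the annular set $A = S_u(x_0, 2t)\setminus \overline{S_u(x_0, t)}$, the definition of capacity yields
$$cap_{L_u}(\overline{S_u(x_0, t)}, V) \le \int_V U^{ij}\Phi_i\Phi_j\,dx = \frac{1}{t^2}\int_A U^{ij} w_i w_j\,dx.$$

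The main step is then to control $\int_{\{w < s\}} U^{ij} w_i w_j$ by $s\,\mu(S_u(x_0, s))$ (up to a harmless constant), which I would do by integration by parts using the two structural properties of the cofactor matrix $U$: namely $\partial_i U^{ij}\equiv 0$ and $U^{ij}u_{ij} = n\det D^2 u$. Because the weight $(s - w)$ vanishes on the boundary $\{w = s\}$ and the set $\{w < s\}$ is compactly contained in $V$, no boundary contributions appear, and using $w_{ij}=u_{ij}$ one obtains
$$\int_{\{w < s\}} U^{ij}w_i w_j\,dx = n\int_{\{w<s\}} (s - w)\det D^2 u\,dx \;\le\; n\,s\,\mu(S_u(x_0, s)).$$

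Taking $s = 2t$ and invoking the parameter-doubling of $\mu$ provided by Lemma \ref{muDP}, i.e.\ $\mu(S_u(x_0, 2t)) \le \beta'\mu(S_u(x_0, t))$, chains the estimates to
$$cap_{L_u}(\overline{S_u(x_0, t)}, V) \;\le\; \frac{2n}{t}\mu(S_u(x_0, 2t)) \;\le\; C(n, \alpha, \beta)\,\mu(S_u(x_0, t))\,t^{-1},$$
which is the claimed bound. I do not anticipate a serious obstacle: the smoothness of $u$ (assumed throughout the note) makes the integration by parts entirely classical, and crucially the argument uses neither the ellipticity of $L_u$ nor any bound on the eigenvalues of $U$ — only the algebraic identities $\partial_i U^{ij}=0$ and $U^{ij}u_{ij}=n\det D^2 u$ together with the doubling property \eqref{muDC}. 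The principle to extract is that the cut-off $(2t - w)_+/t$ beautifully converts the Dirichlet form for $L_u$ into an integral against $d\mu$, which is exactly what one needs for a capacity upper bound.
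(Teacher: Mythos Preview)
Your argument is correct. Both your proof and the paper's test the capacity with a cutoff of the form $\gamma(w)$ supported in $S_u(x_0,2t)$, but the profiles differ: you take the linear truncation $\gamma(s)=(2t-s)_+/t$, while the paper takes the power-law $\gamma(s)\sim s^{-(n-2)/2}$ modeled on the radial Green's function. Correspondingly, the energy is computed differently: you use the clean integration-by-parts identity
\[
\int_{\{w<s\}}U^{ij}w_iw_j \;=\; n\!\int_{\{w<s\}}(s-w)\,d\mu,
\]
whereas the paper applies the coarea formula together with Lemma~\ref{bdrS} to rewrite $\int U^{ij}h_ih_j$ as $\int_t^{2t}\mu(S_u(x_0,s))\,s^{-n}\,ds$ (your identity is in fact the antiderivative of Lemma~\ref{bdrS}). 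Your route is slightly more elementary for this lemma, avoiding coarea entirely. The paper's framework, on the other hand, is set up so that simply replacing the power profile by a logarithmic one immediately yields the sharp two-dimensional estimate of Lemma~\ref{cap_2D_lem}; your linear cutoff on $[t,2t]$ would only give $cap\le C$ when $n=2$, which is not sharp, so the added flexibility of choosing $\gamma$ is what the paper's presentation buys.
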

\begin{lemma} \label{cap_2D_lem} Assume $n=2$. Suppose that $S_u(x_0, t^{1/2})\subset\subset V$ and $0<t<1$. Then
\begin{equation}cap_{L_u}(\overline{S_u(x_0, t)}, V)\leq \frac{8}{ |\text{log t}|^2} \int_{t}^{t^{1/2}} \frac{\mu(S_u(x_0, s)) ds}{s^2}.
\label{upcap_2D}
\end{equation}
\end{lemma}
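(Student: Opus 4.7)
The plan is to exhibit an explicit admissible competitor for the capacity and compute its Dirichlet energy directly. Subtracting the supporting affine function of $u$ at $x_0$ leaves $U^{ij}$, $\det D^2u$, $\mu$, and the sections of $u$ unchanged, so I may assume $u(x_0)=0$ and $\nabla u(x_0)=0$; with this normalization $S_u(x_0,s)=\{u<s\}$ and $u$ attains its unique minimum at $x_0$. Take the test function $\Phi(x)=g(u(x))$, where $g$ equals $1$ on $(-\infty,t]$, equals $0$ on $[t^{1/2},\infty)$, and is linear in $\log s$ in between, namely
$$g(s)=\frac{2(\log t^{1/2}-\log s)}{|\log t|},\qquad g'(s)=-\frac{2}{s|\log t|},\qquad t\le s\le t^{1/2}.$$
Since $S_u(x_0,t^{1/2})\subset\subset V$, the Lipschitz function $\Phi$ lies in $H^{1}_{0}(V)$ and satisfies $\Phi\equiv 1$ on $\overline{S_u(x_0,t)}$, so it is admissible for $\mathrm{cap}_{L_u}(\overline{S_u(x_0,t)},V)$.

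To compute $Q_u(\Phi)$, use $\Phi_i=g'(u)u_i$ so that $U^{ij}\Phi_i\Phi_j=(g'(u))^2\,U^{ij}u_iu_j$. The divergence-free property of $U$ yields the pointwise identity $(U^{ij}u_j)_i=U^{ij}u_{ij}=n\det D^2u$, and applying Gauss--Green on $\{u<s\}$ gives the level-set identity
$$\int_{\{u=s\}}\frac{U^{ij}u_iu_j}{|\nabla u|}\,dS=n\,\mu(S_u(x_0,s)),\qquad s>0,$$
where strict convexity of $u$ guarantees $|\nabla u|>0$ on $\{u=s\}$ for every $s>0$. The co-area formula with the weight $U^{ij}u_iu_j$ therefore yields
$$Q_u(\Phi)=\int_{V}(g'(u))^2\,U^{ij}u_iu_j\,dx=n\int_{t}^{t^{1/2}}(g'(s))^2\,\mu(S_u(x_0,s))\,ds.$$
Plugging in $n=2$ and $g'(s)=-2/(s|\log t|)$ gives exactly
$$Q_u(\Phi)=\frac{8}{|\log t|^2}\int_{t}^{t^{1/2}}\frac{\mu(S_u(x_0,s))\,ds}{s^2},$$
which is the claimed upper bound.

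The main obstacle is the passage from the pointwise identity $(U^{ij}u_j)_i=n\det D^2u$ to the integrated level-set identity and the legitimate use of co-area; both rely on the standing smoothness of $u$ and on strict convexity (which ensures the level sets of $u$ are smooth compact curves with nonvanishing gradient for all $s>0$). The choice of $g$ linear in $\log s$ is dictated by the variational problem of minimizing $\int_{t}^{t^{1/2}}(g')^2\mu(S_u(x_0,s))\,ds$ subject to the boundary values $g(t)=1$, $g(t^{1/2})=0$: the profile $g'(s)\propto 1/s$ is optimal in the model case $\mu(S_u(x_0,s))\sim s$ relevant to $n=2$, and it is precisely this logarithmic profile that produces the weight $s^{-2}$ in the final bound together with the sharp numerical constant $8$.
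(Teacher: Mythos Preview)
Your proof is correct and essentially identical to the paper's own argument. Both use the same logarithmic cutoff competitor $\Phi=g(u)$ (the paper writes it as $\gamma(s)=2\log s/\log t-1$, which coincides with your $g$), invoke the same level-set identity $\int_{\{u=s\}}U^{ij}u_iu_j/|\nabla u|\,dS=n\mu(S_u(x_0,s))$ (recorded separately in the paper as Lemma~\ref{bdrS}), and carry out the same coarea computation to arrive at the constant $8/|\log t|^2$.
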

\begin{remark}
Lemma \ref{cap_3D_lem} can be deduced from the proof of \cite[Theorem 7.2]{MCV}. We present here a slightly different proof whose idea leads to the sharp bound for capacity in dimensions 2
in Lemma \ref{cap_2D_lem}. 
\end{remark}
We now prove Lemmas \ref{cap_3D_lem} and \ref{cap_2D_lem}. By subtracting a linear function, we can assume that
$u\geq 0, ~u(x_0)=0.$ Then $u=s$ on $\p S_u(x_0, s)$ for all $s>0$.
In the proofs of Lemmas \ref{cap_3D_lem} and \ref{cap_2D_lem}, we use the following general fact:
\begin{lemma}\label{bdrS}
We have
$$\int_{\p S_u(x_0, s)} U^{ij}\frac{u_i u_j}{|\nabla u|} = \int_{S_u(x_0, s)} n \det D^2 u.$$
\end{lemma}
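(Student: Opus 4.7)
The plan is to rewrite the left-hand side as a boundary flux and then apply the divergence theorem. After the normalization $u\geq 0$, $u(x_0)=0$, the level set $\{u=s\}$ coincides with $\p S_u(x_0, s)$, and $u$ is (smooth and) strictly convex, so $\nabla u\neq 0$ on $\p S_u(x_0, s)$ and the outer unit normal is
$$\nu = \frac{\nabla u}{|\nabla u|}.$$
Consequently, the integrand on the left can be written as
$$U^{ij}\frac{u_i u_j}{|\nabla u|} = U^{ij} u_i \nu_j,$$
so that
$$\int_{\p S_u(x_0, s)} U^{ij}\frac{u_i u_j}{|\nabla u|}\, dS = \int_{\p S_u(x_0, s)} (U^{ij} u_i)\, \nu_j\, dS.$$

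Next I would apply the divergence theorem to the vector field $X^j = U^{ij} u_i$ on $S_u(x_0, s)$, obtaining
$$\int_{\p S_u(x_0, s)} U^{ij} u_i \nu_j\, dS = \int_{S_u(x_0, s)} \p_j\!\left(U^{ij} u_i\right) dx.$$
Since the cofactor matrix $U = (U^{ij})$ is divergence-free (the key property $\p_j U^{ij} = 0$ already used to write $L_u$ in divergence form in the introduction), the product rule collapses to
$$\p_j(U^{ij} u_i) = U^{ij} u_{ij}.$$
Finally, the standard algebraic identity for the cofactor matrix, $U^{ij} u_{ij} = n\det D^2 u$, yields
$$\int_{S_u(x_0, s)} \p_j(U^{ij} u_i)\, dx = \int_{S_u(x_0, s)} n\det D^2 u,$$
which is the desired equality.

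There is no real obstacle here; the only subtle point is the regularity needed to legitimately apply the divergence theorem on the (possibly only Lipschitz) level set $\p S_u(x_0, s)$. Since the paper assumes $u$ is smooth (with estimates independent of smoothness) and almost every level of a smooth convex function is a smooth hypersurface on which $\nabla u\neq 0$, this presents no issue; the identity for general $s$ then follows by approximating with regular values or by the coarea formula.
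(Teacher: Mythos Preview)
Your proof is correct and uses the same essential ingredients as the paper's: the identification $\nu=\nabla u/|\nabla u|$ on the boundary, the divergence-free property $\p_j U^{ij}=0$, and the identity $U^{ij}u_{ij}=n\det D^2u$. The only cosmetic difference is that the paper integrates $\int_{S_u(x_0,s)}(L_u\phi)\,u$ by parts twice for an arbitrary smooth $\phi$ and then sets $\phi\equiv 1$, whereas you apply the divergence theorem once directly to the vector field $X^j=U^{ij}u_i$; the two computations collapse to the same identity.
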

\begin{proof}[Proof of Lemma \ref{bdrS}]
Let $\phi$ be any smooth function. Let $\nu= (\nu_1, \cdots,\nu_n)$ be the unit outer-normal to $\p S_u(x_0, s)$. Then, integrating by parts twice, and noting that $\nu = \frac{\nabla u}{|\nabla u|}$ on $\p S_u(x_0, s)$, we get
\begin{eqnarray*}
\int_{S_u(x_0, s)} (L_u \phi)u &=& \int_{S_u(x_0, s)} -U^{ij}\phi_{ij} u=\int_{S_u(x_0, s)} U^{ij}\phi_i u_{j}-\int_{\p S_u(x_0, s)} U^{ij}\phi_i u\nu_j\\&=& \int_{S_u(x_0, s)} -U^{ij} u_{ij} \phi +
\int_{\p S_u(x_0, s)} U^{ij} u_j \nu_i \phi -\int_{\p S_u(x_0, s)} U^{ij}\phi_i u\nu_j\\
&=& \int_{S_u(x_0, s)} -U^{ij} u_{ij} \phi + \int_{\p S_u(x_0, s)} U^{ij} \frac{u_i u_j}{|\nabla u|} \phi -\int_{\p S_u(x_0, s)} U^{ij}\frac{\phi_i u_j}{|\nabla u|} u.
\end{eqnarray*}
With $\phi\equiv 1$, using $U^{ij}u_{ij}= n\det D^2 u$, we obtain the equality claimed in the lemma.
\end{proof}
\begin{proof}[Proof of Lemma \ref{cap_3D_lem}]
Let us consider $h(x)=\gamma(u(x))$ where
$$\gamma(s)=\begin{cases} 1 &\mbox{if } s\leq t \\
 \frac{t^{\frac{n-2}{2}}}{1-(1/2)^{\frac{n-2}{2}}} (\frac{1}{s^{\frac{n-2}{2}}}-\frac{1}{(2 t)^{\frac{n-2}{2}}})& \mbox{if } t\leq s\leq 2t\\ 0 &\mbox{if } s\geq 2t. \end{cases}$$
Then $$h\in H^{1}_{0} (S_u(x_0, 2t))~\text{and}~h\equiv 1~\text{in}~ S_{u}(x_0, t).$$
We have
$$\nabla h(x) =\gamma'(u(x))\nabla u(x)= -\frac{n-2}{2}\frac{t^{\frac{n-2}{2}}}{1-(1/2)^{\frac{n-2}{2}}} u^{-\frac{n}{2}}\nabla u(x).$$
Therefore, by the coarea formula and Lemma \ref{bdrS}, we get
\begin{eqnarray*}
\int_{V} U^{ij} h_i h_j &=& \left[\frac{n-2}{2}\frac{t^{\frac{n-2}{2}}}{1-(1/2)^{\frac{n-2}{2}}}\right]^2 \int_{S_{u}(x_0, 2t)\backslash S_u(x_0, t)} U^{ij} \frac{u_i u_j}{u^n}
\\&\leq &C(n) t^{n-2}\int_{t}^{2t} \left(\int_{\p S_u(x_0, s)} U^{ij}\frac{u_i u_j}{s^n} \frac{1}{|\nabla u|}\right)ds\\
&=& C(n) t^{n-2}\int_{t}^{2t}\frac{\mu(S_u(x_0, s))}{s^n} ds
\\&\leq& C(n,\alpha,\beta)  \mu(S_u(x_0, t))t^{-1}
\end{eqnarray*}
where in the last inequality we used Lemma \ref{muDP} which says that
$$\mu(S_u(x_0, s))\leq C \mu(S_u(x_0, t))~\text{for}~t\leq s\leq 2t.$$
We now find from the definition of capacity that
\begin{eqnarray*}
cap_{L_u}(\overline{S_u(x_0, t)}, V)\leq \int_{V} U^{ij} h_i h_j \leq C(n,\alpha,\beta)  \mu(S_u(x_0, t))t^{-1}.
\end{eqnarray*}
\end{proof}

\begin{proof}[Proof of Lemma \ref{cap_2D_lem}]
Let us consider $h(x)=\gamma(u(x))$ where $\gamma$ is the logarithmic cut off function
$$\gamma(s)= \chi_{(-\infty, t)}(s)  + (2\text{log~} s/\text{log~} t-1)\chi_{[t, t^{1/2}]}(s). $$
Then $$h\in H^{1}_{0} (S_u(x_0, t^{1/2}))~\text{and}~h\equiv 1~\text{in}~ S_{u}(x_0, t).$$
We have
$$\nabla h(x) =\gamma'(u(x))\nabla u(x)= \frac{2}{ u~\text{log t}}\nabla u(x).$$
Therefore, by the coarea formula and Lemma \ref{bdrS}, we get
\begin{eqnarray*}
\int_{V} U^{ij} h_i h_j &=& \frac{4}{|\text{log t}|^2} \int_{S_{u}(x_0, t^{1/2})\backslash S_u(x_0, t)} U^{ij} \frac{u_i u_j}{u^2}\\&=&\frac{4}{|\text{log} t|^2}\int_{t}^{t^{1/2}} \left(\int_{\p S_u(x_0, s)} U^{ij}\frac{u_i u_j}{s^2} \frac{1}{|\nabla u|}\right)ds
 \\ &=&\frac{4}{|\text{log t}|^2}\int_{t}^{t^{1/2}}\left(\frac{1}{s^2} \int_{S_u (x_0, s)} n\det D^2 u\right) ds\\& =&
 \frac{8}{ |\text{log t}|^2} \int_{t}^{t^{1/2}} \frac{\mu(S_u(x_0, s)) ds}{s^2}.
\end{eqnarray*}
In the last equality, we used $n=2$. 
By the definition of capacity, we obtain (\ref{upcap_2D}).
\end{proof}
\begin{proof}[Sketch of proof of Theorem \ref{cap_thm}] We sketch here the proof of Theorem \ref{cap_thm}, following \cite{LSW}. We can assume that $L_u:= -U^{ij}\p_{ij}$ is uniformly elliptic. The set of functions $\Phi\in H^{1}_{0}(V)$ satisfying $\Phi\geq 1$ on $K$ is a closed convex set and $H_0^1(V)$ is a Hilbert space. It is then easy to see that there is a unique function $\Phi\in H^{1}_{0}(V)$ satisfying $\Phi\geq 1$ on $K$ and
$$cap_{L_u}(K, V)= Q_u (\Phi).$$ 
This function $\Phi$ is called the {\it capacitary potential} of the set $K$ with respect to the operator $L_u$ and the set $V$. Moreover, by a simple truncation argument, we find that this $\Phi$ satisfies $\Phi\equiv 1$ on $K$. 

The capacitary potential $\Phi$ of the compact set $K$ with respect to the operator $L_u$ and the set $V$ has the following properties:
\begin{myindentpar}{1cm}
(i) $\Phi\equiv 1$ on $K$,~ $\Phi=0$ on $\p V$, ~ $0\leq\Phi\leq 1$ on $V\backslash K.$\\
(ii) $L_u\Phi =0$ on $V\backslash K.$\\
(iii) For all $\varphi\in H^1_0(V)$ with $\varphi\geq 0$ on $K$, we have
$$\int_V U^{ij}\Phi_i\varphi_j\geq 0.$$ 
\end{myindentpar}
From (iii) and Schwartz's theorem on positive distributions, there is a nonnegative measure $\mu$ on $K$, called the {\it capacitary distribution} of $K$ with respect to the operator $L_u$ and the set $V$, such that
\begin{equation}\int_V U^{ij}\Phi_i\varphi_j=\int_V\varphi d\mu ~\text{for all } \varphi\in H^1_0(V) \text{ with } \varphi\geq 0 \text{ on } K.
 \label{cap_mu}
\end{equation}
Since $\Phi\equiv 1$ on $K$, the support of $\mu$ is on $\p K.$ Choosing $\varphi=\Phi$ in the above equation, we find that
\begin{equation}\mu(K)=cap_{L_u}(K, V).
\label{capmu}
\end{equation}
Moreover, we find from (\ref{cap_mu}) that $L_u\Phi=\mu~\text{in}~V.$ Thus, we have the representation
$$\Phi(y) =\int_{V} g_V(x, y) d \mu (x)$$
where we recall that $g_V(x, y)$ is the Green's function of $L_u$ in $V$.

Consider the set
$$J_a =\{x\in V: g_V(x, x_0)\geq a\}.$$
Let $\nu_a$ be the capacitary distribution of $J_a$ with respect to the operator $L_u$ and the set $V$. Then the capacitary potential of $J_a$ with respect to the operator $L_u$ and the set $V$ is equal to $1$ at $x_0$. Thus
$$1=\int_V g_V(x, x_0)d\nu_a(x).$$
The support of $\nu_a$ is on $\p J_a$ where $g_V(x, x_0)=a$. Thus, (\ref{capmu}) gives
$$cap_{L_u}(J_a, V)=\frac{1}{a}.$$
Let $a=\min_{x\in\p S_u(x_0, t)} g_V(x, x_0).$
Then, by the maximum principle
$\overline{S_u(x_0, t)}\subset J_a.$ Therefore
$$cap_{L_u}(\overline{S_u(x_0, t)}, V)\leq cap_{L_u}(\overline{J_a}, V)=\frac{1}{a}=\frac{1}{\min_{x\in\p S_u(x_0, t)} g_V(x, x_0)}.$$
Similarly, if we let $b=\max_{x\in\p S_u(x_0, t)} g_V(x, x_0)$. Then
 $$cap_{L_u}(\overline{S_u(x_0, t)}, V)\leq cap_{L_u}(\overline{J_b}, V)=\frac{1}{b}=\frac{1}{\max_{x\in\p S_u(x_0, t)} g_V(x, x_0)}.$$
It follows that
\begin{equation}\min_{x\in\p S_u(x_0, t)} g_V(x, x_0)\leq (cap_{L_u}(\overline{S_u(x_0, t)}, V))^{-1}\leq \max_{x\in\p S_u(x_0, t)} g_V(x, x_0). 
 \label{cap_S}
\end{equation}
Since $g_V(x, x_0)$ is a positive solution of $L_u g_V(\cdot, x_0)$ in $V\backslash \{x_0\}$, by Theorem \ref{MHolder_thm}, for each $t$ where $S_u(x_0, 2t)\subset\subset V$, we have
$$\max_{x\in\p S_u(x_0, t)} g_V(x, x_0)\leq \beta \min_{x\in\p S_u(x_0, t)} g(x, x_0).$$
This combined with (\ref{cap_S}) gives the desired conclusion.
\end{proof}

{\bf Acknowledgements.} The research of the author was supported in part by the National 
Science Foundation under grant DMS-1500400 and an Indiana University Summer Faculty Fellowship.

{} 
\end{document}